\newtheorem{thm}{Theorem}[section]
\newtheorem{lemma}[thm]{Lemma}
\newtheorem{prop}[thm]{Proposition}
\theoremstyle{definition}
\newtheorem{defn}[thm]{Definition}
\newtheorem{rem}[thm]{Remark}
\numberwithin{equation}{section}
\def\H{\mathcal{H}}
\def\M{\mathbb{M}}
\def\R{\mathbb{R}}
\def\F{\mathbb{F}}
\def\disp{\displaystyle}
\def\bnu{\boldsymbol\nu}
\def\bmu{\boldsymbol\mu}
\begin{document}

\title[Solutions of stochastic evolution equations]{On signed measure valued solutions of stochastic evolution equations}

% indicate corresponding author with \corref{}
\author{Bruno R\'{e}millard}
\address{GERAD and Ser\-vi\-ce
d'en\-sei\-gne\-ment des m\'{e}\-tho\-des quan\-ti\-ta\-ti\-ves de
ges\-tion, HEC Montr\'{e}al,\\
3000, che\-min de la
C\^{o}\-te-Sain\-te-Ca\-the\-ri\-ne, Montr\'{e}al (Qu\'{e}\-bec), Canada H3T
2A7}
\email{bruno.remillard@hec.ca}
\thanks{Partial funding in support of this work was provided by
the Natural Sciences and Engineering Research Council of Canada, and
by the Fonds qu\'{e}b\'{e}cois de la recherche sur la nature et les
technologies}
\author{Jean Vaillancourt}
\address{Rectorat,
UQO, 283, boulevard Alexandre-Tach\'{e},\\
Case postale 1250, succursale Hull Gatineau (Qu\'{e}bec),  Canada J8X 3X7}
\email{jean.vaillancourt@uqo.ca}
\thanks{Partial funding in support of this work was provided by the Natural
Sciences and Engineering Research Council of Canada. To appear in
Stochastic Processes and their Applications}

\begin{abstract}
We study existence, uniqueness and mass conservation of signed
measure valued solutions of a class of stochastic evolution
equations with respect to the Wiener sheet, including as particular
cases the stochastic versions of the regularized two-dimensional
Navier-Stokes equations in vorticity form introduced by Kotelenez.
\end{abstract}

\subjclass{Primary  76D06 60G57 60H15, Secondary 60H05 60J60}

\keywords{Signed measure; stochastic evolution equation; McKean-Vlasov; Wiener sheet}

\maketitle

% ----------------------------------------------------------------
\section{Introduction}
%----------------------------------------------------------------
Measure-valued stochastic processes arise as mathematical
descriptors for the limiting behaviour of many characteristic
parameters used for modelling complex evolutions in the natural
sciences. Genetic drift, bacterial spread, fluid dynamics, heat
conduction and chemical reactions are but a few examples of
challenging problems for which a good mathematical understanding can
be reached by first building an appropriate interacting particle
system and then looking at it through the lens of the associated
(measure valued) empirical process. When properly rescaled, the
processes give rise to measure valued limits solving stochastic
evolution equations which are themselves of great interest. The
study of the existence and uniqueness of measure valued solutions to
stochastic evolution equations really started with
\cite{Dawson:1975} and the subject has grown rapidly since then. For
a nice review of
measure valued processes, see \cite{Dawson:1993}.\\

A broad class of such equations, the so-called stochastic
McKean-Vlasov equations, e.g., \cite{Dawson/Vaillancourt:1995}, are
probability valued if their starting point is. This conservation of
the initial mass turns out to be quite easy to show in this case, by
way of a basic completeness argument which remains valid for many
other examples in the literature. Caution must prevail, however,
against a common misconception which recurs unfortunately too often,
to the effect that an interacting particle system displaying no
creation or annihilation of particles at any time, necessarily gives
rise in its scaling limits to stochastic
evolutions obeying this mass conservation property. While seemingly sensible, such a statement requires proof.\\

For a general treatment of the important and challenging family of stochastic Navier-Stokes equations in vorticity form, such as those
appearing in \cite{Kotelenez:1995}, it is necessary to look for solutions in the space of
signed measures. Mainly because the space of signed measures is not complete for the usual metrics compatible
with the topology of weak convergence, it is much harder to study existence, uniqueness and/or
mass conservation of signed measure valued solutions of stochastic evolution equations.\\

In fact, because of the incompleteness, many of the results
presented in the literature on signed measure valued solutions are
either false or have only been provided with a false proof. This is
the case for example in \cite{Marchioro/Pulvirenti:1982},
\cite{Kotelenez:1995} and \cite{Amirdjanova:2000}, where the space
of signed measures was assumed to be complete.
Other examples of incorrect proofs include  \citet{Kotelenez:2010}, \citet{Kotelenez/Seadler:2011,Kotelenez/Seadler:2012}.
These articles will be discussed later. \\

In this paper, we study existence and uniqueness of signed measure
valued solutions of a class of stochastic evolution equations with
respect to the Wiener sheet, including as particular cases the
stochastic versions of the regularized two-dimensional Navier-Stokes
equations in vorticity form introduced by Kotelenez.  These
stochastic evolution equations can be seen as weak versions of
equations of the form
$$
d \chi_t = L^*_{\chi_t} \chi_t -\nabla (\Gamma \chi_t) dW,
$$
where $L^*$ is the (formal) adjoint of a diffusion operator with
coefficients depending on $\chi$ and $W$ is the Wiener sheet. Other
weak versions of these equations appear for example in
\citet{Amirdjanova/Xiong:2006}. Also, additional references to
similar equations are given in Section 5 where we discuss the
relationship with two-dimensional regularized Navier-Stokes
equations in vorticity form.
\\

The problem  is described in Section \ref{sec:description}, where
appropriate spaces of measures are defined. Next, in Section
\ref{sec:existence}, using a particles representation, signed
measure valued solutions are shown to exists when the initial signed
measure has finite support. The existence is then shown to hold for
general initial conditions. Uniqueness and mass conservation are
shown to hold in Section \ref{sec:mappingS}, using fixed points
arguments and duality. Finally, in Section \ref{sec:relation}, we
revisit some of the results appearing in the literature concerning
two-dimensional Navier-Stokes equations in vorticity form.\\

A commendable attempt at resolving all these issues can be found in
\citet{Kurtz/Xiong:1999},  where the authors show existence of
solutions to a large class of nonlinear stochastic partial
differential equations encompassing our own. Their techniques also
allow them to prove uniqueness of solution, but only for those
starting measures with square integrable densities with respect to
Lebesgue measure. Our constructions yield the existence and
uniqueness of solution for all starting signed measures.

\section{Description of the problem}\label{sec:description}

First, one needs to define correctly the spaces of measures and
signed measures that  will be used in the sequel. To this end,
suppose that $\rho$ is the metric on $\R^d$ defined by
$$
\rho(x,y) = \min(1,\|x-y\|), \quad x,y\in \R^d,
$$
where $\|\cdot\|$ is the Euclidean norm.\\

Suppose that $p\ge 1$ is given. For any  $m\ge 0$, let $M(m)$ be the
set of (non negative) Borel measures $\mu$ with $\mu(\R^d) =m$. This
space is equipped with the Wasserstein metric $W_p$ defined by
$$
W_p(\mu,\nu) = \left[ \inf_{Q\in \H(\mu,\nu)}\int_{\R^{2d}}
\rho^p(x,y) Q(dx,dy)\right]^{1/p},
$$
where $\H(\mu,\nu)$ is the set of all joint representations of
$(\mu,\nu)$, that is, the set of all Borel measures $Q$ on $\mathbb{R}^{d}\times \mathbb{R}^{d}$, so that
for any Borel subset $A$ of $\mathbb{R}^{d}$,
$Q(A\times \R^d) = m\mu(A)$ and $Q(\R^d \times A) = m\nu(A)$.\\

Further set $M(m_1 ,\,m_2)=  M(m_1)\times M(m_2)$ and consider the
following ``Wasserstein'' metric $\gamma_p$ on $M(m_1 ,\,m_2)$:
$$
\gamma_p(\boldsymbol\mu,\boldsymbol\eta) = \gamma_p \left\{ (\mu^1,\mu^2),(\eta^1,
\eta^2)\right\} = \left[ W_p^p \left( \mu^1 ,\eta^1\right) + W_p^p
\left( \mu^2 ,\eta^2\right)\right]^{1/p}.
$$
It follows easily from Minkowski's inequality together with a lemma
in \cite[p.330 ]{Dudley:1989} that for any $p\ge 1$, $\gamma_p$ is a
metric on $M(m_1 ,\,m_2)$, since $W_p$ is a metric on $M(m)$.\\

Let $M^{(f)}(m)$ be the subset of
measures in $M(m)$ concentrated on finite sets and $M^{(f)}(m_1,m_2)
= M^{(f)}(m_1)\times M^{(f)}(m_2)$.\\

Notice that since $\rho \le 1$, for any $\mu,\nu \in M(m)$, one has
$$
W_p^p(\mu, \nu) \le W_1(\mu, \nu) \le  (m^2)^{\frac{p-1}{p}}
W_p(\mu, \nu).
$$
It follows that for any $\boldsymbol\mu,\boldsymbol\nu \in M(m_1,m_2)$,
$$
\gamma_p^p(\boldsymbol\mu, \boldsymbol\nu) \le \gamma_1(\boldsymbol\mu, \boldsymbol\nu) \le (2
m^2)^{\frac{p-1}{p}} \gamma_p(\boldsymbol\mu, \boldsymbol\nu),
$$
where $m= \max(m_1,m_2)$. It follows that $\gamma_p$ and $\gamma_2$
generate the same topology. Therefore, from now on, we use
$p=2$.\\

The following lemma summarizes results in \citet{Huber:1981} and
\citet{Dudley:1989}[Lemma 11.8.4] about spaces of measures endowed
with metrics $W_2$ and $\gamma_2$.

\begin{lemma}\label{lem:polish}
$(M(m),W_2)$ and $(M(m_1 ,\,m_2),\gamma_2)$ are Polish spaces. Their
respective topology is that of weak convergence on $M(m)$ and
$M(m_1,m_2)$.  Moreover  $M^{(f)}(m)$ is dense in $M(m)$ and
$M^{(f)}(m_1,m_2)$ is dense in $M(m_1,m_2)$.
\end{lemma}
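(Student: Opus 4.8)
The plan is to establish each of the three assertions — Polishness, identification of the topology, and density of the finitely-supported measures — first for the single-mass space $(M(m),W_2)$ and then to transfer everything to the product $(M(m_1,m_2),\gamma_2)$ by elementary arguments. For $m>0$, the natural device is to rescale: the map $\mu\mapsto \mu/m$ is a bijection from $M(m)$ onto the space $M(1)$ of Borel probability measures on $\R^d$, and under this bijection $W_p$ on $M(m)$corresponds to $m^{1/p}$ times the usual Wasserstein-$p$ metric on probability measures (here one must be a little careful: the normalization in the definition above puts $Q(A\times\R^d)=m\mu(A)$, so the bookkeeping with the factor $m$ needs to be checked once and for all, but it is just a change of variables in the infimum). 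The case $m=0$ is trivial, $M(0)$ being a singleton. Hence it suffices to quote the classical facts about the Wasserstein space over $\R^d$ with the bounded metric $\rho=\min(1,\|\cdot\|)$.

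First I would invoke the cited results: by \citet{Huber:1981} and \citet{Dudley:1989}[Lemma 11.8.4], the space of Borel probability measures on the Polish space $(\R^d,\rho)$, metrized by the Wasserstein metric built from $\rho$, is itself Polish, and because $\rho$ is a bounded metric inducing the same topology as the Euclidean one, this Wasserstein metric metrizes weak convergence (the boundedness of $\rho$ is what removes the usual moment obstructions and makes $W_p$-convergence coincide with narrow convergence). Pulling back through the rescaling bijection $\mu\mapsto\mu/m$ gives that $(M(m),W_2)$ is Polish with the topology of weak convergence on $M(m)$. For the product, $(M(m_1,m_2),\gamma_2)$ is the Cartesian product of two Polish spaces with a metric, $\gamma_2(\bmu,\bnu)=[W_2^2(\mu^1,\nu^1)+W_2^2(\mu^2,\nu^2)]^{1/2}$, that is visibly topologically equivalent to the max- or sum-metric on the product; a finite product of Polish spaces is Polish, and the product topology of two weak-convergence topologies is the weak-convergence topology on $M(m_1,m_2)$, so the second assertion follows.

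For the density statements, I would argue directly in the Wasserstein metric rather than appealing to a density result for the weak topology, since one wants density in the metric $W_2$ itself (which, having just been identified as metrizing weak convergence, is harmless but cleaner to do by hand). Given $\mu\in M(m)$ and $\varepsilon>0$, cover a large ball $B_R(0)$ carrying all but $\varepsilon$ of the mass of $\mu/m$ by finitely many disjoint Borel sets of $\rho$-diameter $<\varepsilon$, pick a point in each, and push $\mu$ forward by the corresponding ``nearest representative'' map, moving the small leftover mass to any fixed point; the obvious coupling $Q$ supported on the graph of this map shows $W_p^p(\mu,\mu^{(f)})\le \varepsilon^p m + m\varepsilon\cdot 1$, which is $O(\varepsilon)$, and $\mu^{(f)}\in M^{(f)}(m)$ has exactly mass $m$ since pushforward preserves total mass. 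Density of $M^{(f)}(m_1,m_2)$ in $M(m_1,m_2)$ is then immediate by approximating each coordinate separately and using the definition of $\gamma_2$. The main obstacle, such as it is, is purely bookkeeping: getting the mass-normalization factor $m$ in the definition of $\H(\mu,\nu)$ to interact correctly with the rescaling to a probability measure, and making sure the finitely-supported approximant lands in $M^{(f)}(m)$ with the right total mass rather than merely close to it — both are routine but must be done carefully, especially the mass of the ``leftover'' not being dropped.
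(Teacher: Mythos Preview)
Your proposal is correct and aligns with the paper's own treatment: the paper gives no proof at all for this lemma, stating merely that it ``summarizes results in \citet{Huber:1981} and \citet{Dudley:1989}[Lemma 11.8.4]''. You invoke the same two references and simply flesh out the routine rescaling and product arguments that the paper leaves implicit; one minor point is that with the paper's normalization $Q(A\times\R^d)=m\mu(A)$ the coupling has total mass $m^2$, so the rescaling factor is $m^{2/p}$ rather than $m^{1/p}$, but as you note this is pure bookkeeping and does not affect any of the conclusions.
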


Throughout the rest of the paper, let $m_1$, $m_2$ be fixed.\\

Set $M = M(m_1, m_2)$ and $M^{(f)} =  M^{(f)}(m_1, m_2)$. Further
let $\mathcal{M}$ be the space of $M$-valued random variables with
metric $\gamma$ defined by
$$
\gamma(\boldsymbol\chi,\boldsymbol\eta) = \left[ E
\gamma_2^2(\boldsymbol\chi,\boldsymbol\eta)\right]^{1/2},\quad \boldsymbol\chi,\boldsymbol\eta\in \mathcal{M},
$$
and denote by $\mathcal{M}^{(f)}$ the subset of $M^{(f)}$-valued
random variables.\\

Let $C([0,T];M)$ denote the space of continuous mappings from $[0,T]$ into $M$ and let $\mathbb{M}$ be the space of
$C([0,T];M)$-valued random variables with metric $\gamma_{[0,T]}$
defined by
$$
\gamma_{[0,T]}(\boldsymbol\chi,\boldsymbol\eta ) = \left[ E \sup_{0\leq t\leq
T}\gamma_{2}^{2}(\boldsymbol\chi_t,\boldsymbol\eta_t )\right]^{1/2},\quad \boldsymbol\chi,\boldsymbol\eta\in
\mathbb{M},
$$
and let $\mathbb{M}^{(f)}$  be the subset of
$C([0,T];M^{(f)})$-valued random variables.\\

 It follows easily that $(\mathcal{M},\gamma)$ is
a complete metric space, since $(M,\gamma_2)$ is complete, by Lemma
\ref{lem:polish}. Note that $\mathcal{M}^{(f)}$ is dense in
$\mathcal{M}$. Furthermore, $(\M,\gamma_{[0,T]})$ is also complete
and  $\M^{(f)}$ is a dense subset of $\mathbb{M}$.\\

Finally, let $SM(m_1,m_2)$ be the set of Borel signed measures with
Hahn-Jordan decomposition $(\mu^1,\mu^2)\in M(m_1,m_2)$. A natural
distance on $SM(m_1,m_2)$ is the one inherited from $\gamma_2$. More
precisely, if $\boldsymbol\mu=(\mu^1,\mu^2)$ and $\boldsymbol\nu=(\nu^1,\nu^2)$ are respectively
the Hahn-Jordan  decompositions of
$\mu,\nu\in SM(m_1,m_2)$, then the distance between $\mu$ and $\nu$ is $\gamma_2(\boldsymbol\mu, \boldsymbol\nu)$. \\

\begin{rem}
Many authors, because of Lemma \ref{lem:polish}, have stated that
$SM(m_1, m_2)$, endowed with that metric, is complete, while it is not.\\

To see that, simply take $\mu_n = \delta_{1/n}-\delta_{0}$. Then
clearly its Hahn-Jordan decomposition is
$(\delta_{1/n},\delta_{0})$, so $\mu_n\in SM(1,1)$. Moreover, it is
obvious that $\mu_n$ is a Cauchy sequence in $SM(1,1)$ since
$\gamma_2((\delta_{1/n},\delta_0),(\delta_{1/m},\delta_0))\to 0$ as
$n, m\to\infty$. However, there is no measure $\mu\in SM(1,1)$ with
Hahn-Jordan decomposition $(\mu_1,\mu_2)$ so that
$\gamma_2((\delta_{1/n},\delta_0),(\mu_1,\mu_2))\to 0$ as
$n\to\infty$. For, the latter implies that $\mu_1=\mu_2=\delta_0$,
so
$\mu = \delta_0-\delta_0=0 \not\in SM(1,1)$.\\
\end{rem}

In order to preserve the mass in the limit,
\citet{Kotelenez/Seadler:2011,Kotelenez/Seadler:2012}  introduce the
following metric $\lambda$ on the space of signed measures: If $\mu$
and $\nu$ have Hahn-Jordan decompositions $(\mu^+,\mu^-)$ and
$(\nu^+,\nu^-)$, then the distance between $\mu$ and $\nu$, denoted
by $\lambda(\mu,\nu)$, is defined by
\begin{eqnarray}\label{eq:KSdistance}
\lambda(\mu,\nu) &=&
\inf_{\eta\in M(m), m\ge 0} \max\left[
\gamma(\mu^+ -\nu^+ -\eta)+\gamma(\mu^- -\nu^- -\eta), \right.\\
&& \qquad \qquad \qquad \qquad \qquad \left. \gamma(\mu^+ -\nu^+ +\eta)+\gamma(\mu^- -\nu^- +\eta)\right],\nonumber
\end{eqnarray}
where $\gamma(\chi)  = \sup_{f;\|f\|\wedge \|f\|_L\le 1}|<\chi,f>|$,
for any finite signed measure $\chi$,  and $\|\cdot\|_L$ is the
so-called minimal Lipschitz constant defined by
\eqref{eq:lipschitz}.

It is shown in \citet[Theorem A.6]{Kotelenez/Seadler:2012} that with
respect to $\lambda$,  a Cauchy sequence $\mu_n$ converges to $\mu$
if and only if there  exists a sub-sequence $\mu_{n_k}$ so that the
Hahn-Jordan decompositions $\mu_{n_k}^\pm $ converges to the
Hahn-Jordan decomposition $\mu^\pm$ of $\mu$.

\begin{rem}\label{rem:KSdistance}
This new metric is not as useful as it seems since,  in order to
show that a sequence $\mu_n$ converges to a limit $\mu$, one needs
to prove the convergence of the Hahn-Jordan decompositions to that
limit, at least for a subsequence. In
\citet{Kotelenez/Seadler:2011,Kotelenez/Seadler:2012}, from the
convergence of the Hahn-Jordan decompositions $\mu_n^+$ and
$\mu_n^-$ to measures $\mu_1$ and $\mu_2$, they conclude that the
sequence is a Cauchy sequence in $\lambda$, which is true, but they
also conclude that the sequence converges in $\lambda$, which is not
necessarily true. To see that, take the same example as before,
i.e., $\mu_n = \delta_{1/n}-\delta_0$. Since $\mu_n^\pm \to
\delta_0$,  the sequence is Cauchy with respect to $\lambda$.
However it is not a convergent sequence, according to \citet[Theorem
A.6]{Kotelenez/Seadler:2012}.
\end{rem}

\begin{rem}\label{rem:hypotheses}
Before defining the stochastic evolution equations of interest here,
we state some properties which are assumed to hold throughout the
rest of this paper.

Suppose that $K(x,y):\R^d\times \R^d \mapsto \R^d $ is bounded, differentiable with respect to $x$ and
Lipschitz in both variables, i.e there exists a constant $C$ such that
$$
\|K(x,y)-K(x',y')\| \le C\left\{ \|x-x'\|^2+
\|y-y'\|^2\right\}^{1/2}
$$
holds for all $x,x', y,y' \in \R^d$.

Moreover assume that $\Gamma : \R^d\times \R^d \mapsto \R^{d\times
d} $ is a continuous function that satisfies
\begin{equation}\label{eq:gamma}
\sum_{j=1}^d \sum_{l=1}^d \int \left\{
\Gamma_{jl}(r,p)-\Gamma_{jl}(q,p)\right\}  ^{2}dp\leq C_\Gamma^2
\|r-q\|^2.
\end{equation}
for some positive constant $C_\Gamma$ and $G(x,y) = \int_{\R^d} \Gamma(x,p)\Gamma(y,p)^\top dp$ is such that
$$
\max_{1\le i,j\le d} \sup_{x,y} \left\{ \frac{|G_{ij}(x,y)|}{(1+|x|)(1+|y|)}+  \max_{1\le k, l\le d}\left|\partial_{x_k}\partial_{y_l}G_{ij}(x,y)\right|\right\} < \infty.
$$
\end{rem}

Given is a stochastic basis $(\Omega,\mathcal{ F},\mathbb{F} =
(\mathcal{ F}_t)_{t\ge 0},P)$ which satisfies the usual conditions.
All stochastic processes are assumed to live on $\Omega$ and to be
$\F$-adapted, including all initial conditions in SDE's and SPDE's.
Moreover, the processes are assumed to be $P\otimes \lambda$
measurable, where $\lambda$ is the Lebesgue measure on
$\left[  0,\infty\right)$.

Recall that a Wiener sheet $w$ is a stochastic process defined on
$\R^d \times[0,\infty)$, such that for any Borel set $A$ with finite
Lebesgue measure $\lambda(A)$,  $t\mapsto w(A,t)$ is a continuous
centered Gaussian process with covariance function
$E\{w(A,s)w(B,t)\} = \min(s,t)\lambda(A\cap B)$, whenever $A$ and $B$ both have finite
Lebesgue measures. That implies, in particular that $w(A\cup B,t) =
w(A,t)+w(B,t)$ almost surely, whenever $A$ and $B$ both have finite
Lebesgue measures and are disjoint.\\

The following stochastic evolution equation, with respect to a
vector $w=(w_1,\ldots, w_d)$ of independent Wiener sheets,  will be analyzed next.
To be adapted  for $w_{l}(r,t)$ means that $\disp \int_{A}%
$~$w_{l}(dp,t)$ is adapted for any Borel set $A\subset {\mathbb
R}^d$ with\ $\lambda(A) <\infty$.
The space of twice-differentiable real-valued functions with bounded second derivative on ${\mathbb R}^d$ is henceforth denoted
by $C_{b}^{2}({\mathbb R}^d)$.\\\\

\begin{defn}\label{def:NS}
A path $\chi  = \chi^1 -\chi^2 $, with $(\chi^1,
\chi^2)\in \M$, is called a solution of the stochastic evolution
equation if
\begin{equation}\label{eq:WSNSE}
d<\!\chi_t,f\!> = <\!\chi_t, L(\chi_t) f \!>dt + \int <\!\chi_t, \nabla f(\cdot)^\top\Gamma(\cdot
,p)\!>w(dp,dt),
\end{equation}
holds for all $f\in C_{b}^{2}({\mathbb R}^d)$, where we write, for any $x\in\mathbb{R}^d$,
$$
U (x,\chi_t) = \int K(x,q)\chi_t(dq),
$$
and
$$
L (\chi_t)f(x) = \sum_{j=1}^{d} \partial_{x_j}f(x)U_j
(x,\chi_t) +\frac{1}{2} \sum_{j=1}^{d} \sum_{k=1}^{d}
\partial_{ x_j}\partial_{x_k} f(x)G_{jk}(x,x),
$$
with
$<\!\chi,f\!>$ being the integral of $f$ with respect to $\chi$.\\
\end{defn}

The existence of such a solution is studied in the next section.\\

\begin{rem}\label{rem:conservation}
Note that even if $\chi_t = \chi_t^1-\chi_t^2$, with
$( \chi_t^1,\chi_t^2) \in \mathcal{M}$ for all $t$, it does
not necessarily imply that $( \chi_t^1,\chi_t^2)$ is the Hahn-Jordan decomposition
of $\chi_t$. That (desirable) property is hereafter called
``mass conservation''. It will be studied in Section
\ref{sec:mappingS}. In the setting of \cite{Kotelenez:1995}, this is
called conservation of vorticity and this has important physical
implications. See, e.g., \cite{Chorin/Marsden:1993}.
\end{rem}

%%%%%%%%%%%%%%%%%%%%%%%%%%%%%%%%%%%%%%%%%%%%%%%%%%%%%%%%%%%%%%%%%%%%%%%%%%%%%%%%%
\section{Existence of signed measure valued solutions}\label{sec:existence}
%%%%%%%%%%%%%%%%%%%%%%%%%%%%%%%%%%%%%%%%%%%%%%%%%%%%%%%%%%%%%%%%%%%%%%%%%%%%%%%%%

The key argument in proving the existence of a solution to
\eqref{eq:WSNSE} is the construction of an
appropriate particles system.\\

For instance, consider the following system of SODEs that describe
the movement of $N$ interacting particles $r^1(t),\ldots, r^N(t)$:
\begin{equation}\label{eq:sys-part}
\left\{
\begin{array}
[c]{l}%
dr^{i}(t)=\sum_{j=1}^{N}a_{j}K(r^{i}(t),r^{j}(t))dt+ \int \Gamma(r^{i}(t),p)w(dp,dt), \\
r^{i}(0)=r_{0}^{i},\; i=1,2,\ldots   , N,
\end{array}
\right.
\end{equation}
where $a_1,\ldots, a_N$ are fixed real numbers.\\

Existence and uniqueness of system of particles \eqref{eq:sys-part}
was stated in \citet{Kotelenez:1995} for particular cases of $K$ and
$\Gamma$. Under the general Lipschitz conditions stated in
Remark \ref{rem:hypotheses},
one has the following result.

\begin{lemma}\label{lem:sys-sde}
For every $r_0\in \R^{dN}$, \eqref{eq:sys-part} has a unique
$\F_{t}$-adapted solution $r\in C(\left[ 0,\infty\right) ;{\mathbb
R}^{dN})$ a.s., which is an ${\mathbb R}^{dN}$-valued Markov
process.\
\end{lemma}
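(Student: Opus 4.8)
The plan is to recognize (\ref{eq:sys-part}) as a finite-dimensional stochastic differential equation driven by a (countable) family of independent Brownian motions, and then invoke the classical existence-and-uniqueness theorem for SDEs with globally Lipschitz coefficients. First I would rewrite the Wiener-sheet integral as a genuine stochastic integral against Brownian motions. Fix an orthonormal basis $(e_n)_{n\ge 1}$ of $L^2(\R^d)$ and set $\beta_n(t) = \int e_n(p)\,w(dp,t)$; each $\beta_n$ is a standard one-dimensional Brownian motion, the family $(\beta_n)_n$ is independent, and for each fixed $x$ the coefficient $p\mapsto\Gamma(x,p)$ lies in $L^2(\R^d)$ (this is exactly what the hypothesis (\ref{eq:gamma}) together with boundedness of $\Gamma$ guarantees, after noting $\int\Gamma_{jl}(x,p)^2\,dp = G_{jl}(x,x)<\infty$). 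Expanding, $\int\Gamma(r^i(t),p)\,w(dp,dt) = \sum_n \Gamma_n(r^i(t))\,d\beta_n(t)$ where $\Gamma_n(x) = \int\Gamma(x,p)e_n(p)\,dp$, and $\sum_n |\Gamma_n(x)|^2 = \mathrm{tr}\,G(x,x) < \infty$.

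Next I would verify the global Lipschitz and linear-growth bounds on the coefficients of the resulting system on $\R^{dN}$. The drift $b^i(r) = \sum_{j=1}^N a_j K(r^i,r^j)$ is globally Lipschitz and bounded because $K$ is bounded and Lipschitz in both variables by Remark \ref{rem:hypotheses}; summing over the $N$ components only changes the constant. For the diffusion part, one checks that the map $r\mapsto(\Gamma_n(r^i))_{n\ge1,\,1\le i\le N}$, viewed as taking values in $\ell^2$, is globally Lipschitz: indeed $\sum_n\|\Gamma_n(x)-\Gamma_n(y)\|^2 = \sum_{j,l}\int\{\Gamma_{jl}(x,p)-\Gamma_{jl}(y,p)\}^2\,dp \le C_\Gamma^2\|x-y\|^2$ by (\ref{eq:gamma}), and likewise $\sum_n\|\Gamma_n(x)\|^2 = \mathrm{tr}\,G(x,x)$ is bounded (here the $(1+|x|)(1+|y|)$ growth bound on $G$ gives at worst linear growth of the diffusion coefficient, which suffices). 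So the system falls under the standard Itô theory for SDEs in $\R^{dN}$ driven by countably many independent Brownian motions with Lipschitz, linearly-growing coefficients, yielding a unique $\F_t$-adapted strong solution with a.s.\ continuous paths on $[0,\infty)$.

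Finally, the Markov property is the standard consequence of strong uniqueness together with the independence and stationary-increments structure of the driving noise: the flow generated by (\ref{eq:sys-part}) satisfies the usual cocycle identity, and uniqueness in law of the solution started from any deterministic point at any time gives the Markov property (in fact the strong Markov property). I would cite a standard reference such as \cite{Dawson:1993} or a textbook on stochastic differential equations rather than reproving it.

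The only genuinely non-routine point is the reduction of the Wiener-sheet integral to an $\ell^2$-valued Brownian integral and checking that the hypotheses in Remark \ref{rem:hypotheses} — in particular (\ref{eq:gamma}) and the bound on $G$ — are precisely what make the diffusion coefficient globally Lipschitz with linear growth in the $\ell^2$ sense; once that translation is in place everything else is an appeal to classical SDE theory. I expect that reduction, and the bookkeeping of constants when passing from a single particle to the coupled $N$-particle system, to be the main (if modest) obstacle.
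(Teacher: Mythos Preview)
Your approach is correct and genuinely different from the paper's. The paper carries out a direct Picard iteration working with the Wiener-sheet integral as is: it defines the map $q\mapsto\hat q$ with $\hat q^i(t)=r^i(0)+\int_0^t\int K(q^i(s),p)Q_q(dp,s)\,ds+\int_0^t\int\Gamma(q^i(s),p)\,w(dp,ds)$, bounds the drift increment by Cauchy--Schwarz and the Lipschitz constant of $K$, bounds the stochastic increment via Doob's inequality and~(\ref{eq:gamma}), and then runs the usual Borel--Cantelli plus Gronwall routine in the sup norm on $C([0,T];\R^{dN})$. You instead absorb all of this into a citation by first expanding the sheet along an orthonormal basis of $L^2(\R^d)$ to obtain an $\ell^2$-driven SDE in $\R^{dN}$ and then checking that the coefficients are globally Lipschitz in the Hilbert--Schmidt sense. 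Your route is shorter and cleaner provided one is willing to invoke the cylindrical-noise SDE theorem; the paper's route is self-contained and makes the constants explicit, which matters downstream since the same estimates are recycled verbatim in Lemmas~\ref{lem:extension} and~\ref{lem:contraction}. Two small corrections: $\Gamma$ is only assumed continuous in Remark~\ref{rem:hypotheses}, not bounded, so the square-integrability of $p\mapsto\Gamma(x,p)$ should be deduced from the finiteness of $\mathrm{tr}\,G(x,x)$ alone; and the identity you wrote as $\int\Gamma_{jl}(x,p)^2\,dp=G_{jl}(x,x)$ should read $\sum_l\int\Gamma_{jl}(x,p)^2\,dp=G_{jj}(x,x)$.
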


For the sake of completeness, the proof is given in Appendix
\ref{app:kol},  completing some missing arguments in
\citet{Kotelenez:1995}. Note that by construction, if $\pi$ is any
permutation of $\{1,\ldots, N\}$, the solution corresponding to
$a_\pi$ is $r^\pi$.

\begin{lemma}\label{lem:map}
Suppose that $(\nu^1,\nu^2) \in M^{(f)}$ is such that
$$
\nu^1 =  \sum_{i=1}^N  \max(a_i,0) \delta_{r_0^i},\qquad \nu^2 =
\sum_{i=1}^N  \max(-a_i,0) \delta_{r_0^i},
$$
for some $r_0 \in \R^{dN}$ and $a = (a_1,\ldots, a_N)^\top \in \R^N$ with the property that
$$
\sum_{i=1}^N \max(a_{i},0) = m_1,\qquad \sum_{i=1}^N \max(-a_{i},0)
= m_2.
$$

Then, the mapping $\Psi: (\nu^1,\nu^2) \in M^{(f)} \mapsto \Psi (\nu^1,\nu^2) = (\chi^1,\chi^2)
\in \M^{(f)}$, given for all $t\ge 0$ by
$$
\chi^1_t = \sum_{i=1}^N  \max(a_i,0) \delta_{r^i(t)}, \quad \chi^2_t
= \sum_{i=1}^N  \max(-a_i,0) \delta_{r^i(t)},
$$
is well-defined, where $r$ satisfies \eqref{eq:sys-part}.

Moreover,  $(\chi^1,\chi^2)\in \M^{(f)}$ gives rise
to a solution of the stochastic evolution equation, i.e., the
empirical signed measure $\chi_t$ defined by
$$
\chi_t = \chi^1_t-\chi^2_t = \sum_{i=1}^N  a_i
\delta_{r^i(t)},
$$
satisfies  \eqref{eq:WSNSE}.
\end{lemma}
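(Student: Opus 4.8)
The plan is to verify the two assertions of Lemma \ref{lem:map} in turn: first that $\Psi$ is well-defined as a map into $\M^{(f)}$, and second that the resulting empirical signed measure solves \eqref{eq:WSNSE}. For well-definedness, the only thing to check is that $\Psi(\nu^1,\nu^2)$ does not depend on the particular labelling of the atoms of $(\nu^1,\nu^2)$: if two data $(r_0,a)$ and $(r_0',a')$ produce the same pair of finite measures $(\nu^1,\nu^2)$, they must differ by a permutation $\pi$ of $\{1,\dots,N\}$ (after padding with zero-weight atoms and merging coincident points), and by the remark following Lemma \ref{lem:sys-sde} the solution driven by $a_\pi$ is exactly $r^\pi$, so the weighted empirical measures $\sum_i \max(\pm a_i,0)\delta_{r^i(t)}$ are unchanged. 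One also notes that $\chi^1_t(\R^d)=\sum_i\max(a_i,0)=m_1$ and $\chi^2_t(\R^d)=m_2$ for all $t$, so $(\chi^1_t,\chi^2_t)\in M^{(f)}=M^{(f)}(m_1,m_2)$, and continuity of $t\mapsto r^i(t)$ (Lemma \ref{lem:sys-sde}) gives continuity of $t\mapsto(\chi^1_t,\chi^2_t)$ in $\gamma_2$; adaptedness is inherited from $r$. Hence $\Psi(\nu^1,\nu^2)\in\M^{(f)}$.

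The substantive part is showing that $\chi_t=\sum_{i=1}^N a_i\delta_{r^i(t)}$ satisfies \eqref{eq:WSNSE}. The natural route is to apply It\^o's formula to $f(r^i(t))$ for each $i$, where $f\in C_b^2(\R^d)$, using the SDE \eqref{eq:sys-part}. This gives
\begin{equation*}
df(r^i(t)) = \sum_{j=1}^d \partial_{x_j}f(r^i(t))\, dr^i_j(t) + \frac12\sum_{j,k=1}^d \partial_{x_j}\partial_{x_k}f(r^i(t))\, d\langle r^i_j,r^i_k\rangle_t.
\end{equation*}
The drift term of $dr^i_j(t)$ is $\sum_{l=1}^N a_l K_j(r^i(t),r^l(t))\,dt$, and since $U(x,\chi_t)=\int K(x,q)\chi_t(dq)=\sum_{l=1}^N a_l K(x,r^l(t))$, this is exactly $U_j(r^i(t),\chi_t)\,dt$. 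The martingale term is $\int \Gamma_{j\cdot}(r^i(t),p)\,w(dp,dt)$, whose contribution, when written against $\nabla f$, is $\int \nabla f(r^i(t))^\top\Gamma(r^i(t),p)\,w(dp,dt)$. For the quadratic variation, the independence of the $d$ component Wiener sheets $w_1,\dots,w_d$ together with $E\{w_l(dp,dt)w_l(dq,dt)\}=\delta(p-q)\,dp\,dt$ yields $d\langle r^i_j,r^i_k\rangle_t = \sum_l\int\Gamma_{jl}(r^i(t),p)\Gamma_{kl}(r^i(t),p)\,dp\,dt = G_{jk}(r^i(t),r^i(t))\,dt$, by the definition $G(x,y)=\int\Gamma(x,p)\Gamma(y,p)^\top\,dp$. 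Substituting these in and recognizing the definition of $L(\chi_t)f$, we get $df(r^i(t)) = L(\chi_t)f(r^i(t))\,dt + \int \nabla f(r^i(t))^\top\Gamma(r^i(t),p)\,w(dp,dt)$. Multiplying by $a_i$ and summing over $i$ gives precisely \eqref{eq:WSNSE}, using $\langle\chi_t,g\rangle=\sum_i a_i g(r^i(t))$ for the three integrands $g=f$, $g=L(\chi_t)f$, and $g(\cdot)=\nabla f(\cdot)^\top\Gamma(\cdot,p)$.

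The step I expect to require the most care is the stochastic-integral bookkeeping for the Wiener-sheet terms: making precise that It\^o's formula applies with a Wiener sheet driver (rather than a finite-dimensional Brownian motion), that the cross-variation computation correctly uses the covariance structure $E\{w_l(A,s)w_{l'}(B,t)\}=\delta_{ll'}\min(s,t)\lambda(A\cap B)$ and the independence across $l$, and that the integrals $\int\nabla f(r^i(t))^\top\Gamma(r^i(t),p)\,w(dp,dt)$ are well-defined and square-integrable — this uses boundedness of $K$ and $f\in C_b^2$ to control the drift, and the hypotheses on $\Gamma$ and $G$ in Remark \ref{rem:hypotheses} (in particular \eqref{eq:gamma} and the growth/derivative bound on $G$, here evaluated on the compact set traced out by the finitely many continuous paths $r^i$) to control the diffusion coefficient and its quadratic variation on $[0,T]$. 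Everything else is a routine rearrangement of the It\^o expansion into the weak form \eqref{eq:WSNSE}.
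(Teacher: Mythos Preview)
Your proposal is correct and follows essentially the same approach as the paper: permutation invariance of the particle system (via the uniqueness in Lemma~\ref{lem:sys-sde}) to show $\Psi$ is well-defined, followed by It\^o's formula applied to $f(r^i(t))$, identifying the drift with $U(r^i(t),\chi_t)$ and the quadratic covariation with $G_{jk}(r^i(t),r^i(t))$, then summing against the weights $a_i$ to recover \eqref{eq:WSNSE}. Your added remarks on mass preservation, continuity and adaptedness of $t\mapsto(\chi^1_t,\chi^2_t)$, and on the Wiener-sheet stochastic calculus are welcome elaborations but do not depart from the paper's line of argument.
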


\begin{proof} First, let $\pi$ be any permutation of $\{1,\ldots, N\}$.
Then  $(\chi_0^1,\chi_0^2) = (\nu^1,\nu^2)$ can also be written as
$$
\chi_0^1  =  \sum_{i=1}^N  \max(a_{\pi_i},0) \delta_{r_0^{\pi_i}},
\quad \chi_0^2  =  \sum_{i=1}^N  \max(-a_{\pi_i},0)
\delta_{r_0^{\pi_i}}.
$$
By Lemma \ref{lem:sys-sde},  it follows that the unique solution
$q$ to
$$
\left\{
\begin{array}
[c]{l}%
dq^{i}(t)=\sum_{j=1}^{N}a_{\pi_j}K(q^{i}(t),q^{j}(t))dt+ \int \Gamma(q^{i}(t),p)w(dp,dt), \\
q^{i}(0)=r_{0}^{\pi_i},\; i=1,\ldots   , N,
\end{array}
\right.
$$
is $q^i = r^{\pi_i}$. Hence $(\chi^1,\chi^2) = \Psi(\nu^1,\nu^2)$ is well-defined.\\

Let
$\chi = \chi^1 - \chi^2$ and set
$U (x,\chi_t) = \int K(x,p)\chi_t(dp)$, $x\in \mathbb{R}^d$.
Note that for all $i=1,\ldots, N$,
$$
r^i(t) = r_0^i+ \int_0^t U(r^i(s),\chi_s)ds +M^i(t),
$$
where the $\mathbb{R}^d$-valued martingale $M^i$ has components
$$
M_j^i(t) =\sum_{l=1}^d \int_0^t \int
\Gamma_{jl}(r^i(s),p)w_l(dp,ds)
$$
and quadratic covariation
\begin{equation}\label{quadvar2}
\left < \left < M_j^i, M_k^i\right> \right>(t) = \sum_{l=1}^d
\int_{0}^{t}\int \Gamma_{jl}(r^{i}(s),p) \Gamma_{kl}
(r^{i}(s),p)dpds,
\end{equation}
for any $j,k=1,\ldots, d$.\\

Now, let $f\in C_{b}^{2}({\mathbb R}^d)$. Applying
It\^{o}'s formula to
$<\chi_t,f>=\sum_{j=1}^{N}a_{j}f(r^{j}(t))$, we obtain
\begin{eqnarray*}
d<\chi_t,f> &=& \sum_{i=1}^{N}\sum_{j=1}^{d}a_{i} \partial_{x_j}f(r^{i}(t))U_j (r^{i}(t),\chi_s)dt \\
&& \quad + \frac{1}{2}\sum_{i=1}^{N}\sum_{j=1}^{d} \sum_{k=1}^{d} a_{i} \partial_{ x_j}\partial_{x_k} f(r^{i}(t))G_{jk}(r^i(t)) dt\\
&& \qquad +\sum_{i=1}^{N}\sum_{j=1}^{d} \sum_{l=1}^d a_{i} \partial_{x_j} f(r^{i}(t))\int \Gamma_{jl} (r^{i}(t),p)w_{l}(dp,dt)\\
&=& <\chi_t, L(\chi_t) f >dt \\
&& \qquad +  \int <\chi_t, \nabla f(\cdot)^\top\Gamma(\cdot
,p)> w(dp,dt),
\end{eqnarray*}
which is exactly  \eqref{eq:WSNSE}. \end{proof}

The following lemma will allow us to extend the solution of \eqref{eq:WSNSE}
for discrete initial conditions to one with arbitrary initial conditions in
$M$.

\begin{lemma}\label{lem:extension}
For an any $T>0$, there exist $c'=c'(T), c''=c''(T) >0$, independent
of $\boldsymbol\chi_0=(\chi_0^1,\chi_0^2), \boldsymbol\eta_0 =(\eta_0^1,\eta_0^2)\in M^{(f)}$, such that if $\boldsymbol\chi = \Psi(\boldsymbol\chi_0)$
and $\boldsymbol\eta = \Psi(\boldsymbol\eta_0)$, then
\begin{equation}\label{eq:relation}
\gamma_{[0,T]}(\boldsymbol\chi,\boldsymbol\eta)\leq c' \gamma_{2}(\boldsymbol\chi_0,\boldsymbol\eta_0).
\end{equation}

Moreover
\begin{equation}\label{eq:kr3}
E\sup_{0\le t\le T} \sup_{{\|f\|}_L \le 1}
<\chi_t-\eta_t,f>^2 \; \le c'' \gamma_2^2
(\boldsymbol\chi_0,\boldsymbol\eta_0).
\end{equation}
\end{lemma}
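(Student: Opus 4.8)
The plan is to reduce everything to a Gronwall-type estimate for the particle systems underlying $\Psi(\boldsymbol\chi_0)$ and $\Psi(\boldsymbol\eta_0)$. Write $\boldsymbol\chi_0$ in terms of weights $a\in\R^N$ and locations $r_0\in\R^{dN}$ and $\boldsymbol\eta_0$ in terms of weights $b\in\R^M$ and locations $s_0\in\R^{dM}$, as in Lemma \ref{lem:map}; after padding with zero weights and relabelling we may assume $N=M$. The first step is to fix an \emph{optimal coupling}: choose a joint representation $Q$ of $(\boldsymbol\chi_0,\boldsymbol\eta_0)$ realizing $\gamma_2^2(\boldsymbol\chi_0,\boldsymbol\eta_0)$, which (both measures being finitely supported) can be taken to be a finite sum of atoms; this amounts to a coupling of the index pairs $(i,j)$ together with the pairing $r_0^i\leftrightarrow s_0^j$ whose weighted cost is $\gamma_2^2(\boldsymbol\chi_0,\boldsymbol\eta_0)=\sum W_2^2$. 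Running the particle systems \eqref{eq:sys-part} for $r$ (weights $a$) and for $s$ (weights $b$) \emph{driven by the same Wiener sheet $w$}, we then bound $\gamma_{[0,T]}(\boldsymbol\chi,\boldsymbol\eta)$ by the transport cost of the image of $Q$ under $(r_0^i,s_0^j)\mapsto(r^i(t),s^j(t))$, i.e.
\[
\gamma_2^2(\boldsymbol\chi_t,\boldsymbol\eta_t)\;\le\;\sum_{(i,j)} q_{ij}\,\rho^2\!\bigl(r^i(t),s^j(t)\bigr)\;\le\;\sum_{(i,j)} q_{ij}\,\bigl\|r^i(t)-s^j(t)\bigr\|^2 ,
\]
using only that $\rho\le\|\cdot\|$ and that each side is a sub-optimal coupling.

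The second step is the pathwise estimate for $Z^{ij}(t):=r^i(t)-s^j(t)$. Subtracting the two integral equations,
\[
Z^{ij}(t)=Z^{ij}(0)+\int_0^t\!\bigl(U(r^i(u),\chi_u)-U(s^j(u),\eta_u)\bigr)du+\int_0^t\!\!\int\bigl(\Gamma(r^i(u),p)-\Gamma(s^j(u),p)\bigr)w(dp,du).
\]
Apply It\^o to $\|Z^{ij}(t)\|^2$, then $\sup_{t\le T}$, then expectation; use Burkholder–Davis–Gundy on the stochastic term and \eqref{eq:gamma} to control its bracket by $C_\Gamma^2\int_0^t\|Z^{ij}(u)\|^2du$, and use the Lipschitz bound on $K$ (Remark \ref{rem:hypotheses}) to control the drift difference. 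The one genuinely delicate point — and what I expect to be the main obstacle — is the term $U(r^i,\chi_u)-U(s^j,\eta_u)$: it is not simply Lipschitz in $Z^{ij}$ because $\chi_u\ne\eta_u$. Splitting it as $\bigl(U(r^i,\chi_u)-U(s^j,\chi_u)\bigr)+\bigl(U(s^j,\chi_u)-U(s^j,\eta_u)\bigr)$, the first piece is $\le C\|Z^{ij}(u)\|$ by Lipschitzness of $K$ in its first argument, while the second, $\int K(s^j(u),q)\,(\chi_u-\eta_u)(dq)$, must be estimated \emph{through the same coupling}: writing $\chi_u-\eta_u$ as $\sum_{k,l}q_{kl}(\delta_{r^k(u)}-\delta_{s^l(u)})$ (up to the sign bookkeeping between $\chi^1,\chi^2$ and the weights), Lipschitzness of $K$ in its second argument gives $\bigl|\int K(s^j,q)(\chi_u-\eta_u)(dq)\bigr|\le C\sum_{k,l}q_{kl}\|Z^{kl}(u)\|$. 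Thus the bound on $E\sup_{t\le T}\|Z^{ij}(t)\|^2$ couples all index pairs.

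The third step closes the loop. Let $\Phi(t):=\sum_{(i,j)}q_{ij}\,E\sup_{u\le t}\|Z^{ij}(u)\|^2$. Summing the per-pair estimates against the weights $q_{ij}$ and using $\sum_{(i,j)}q_{ij}\le 2m$ (with $m=\max(m_1,m_2)$) to absorb the cross terms via Cauchy–Schwarz, one gets $\Phi(t)\le \Phi(0)+C_T\int_0^t\Phi(u)\,du$ with $C_T$ depending only on $C$, $C_\Gamma$, $m_1$, $m_2$, $T$; note $\Phi(0)=\gamma_2^2(\boldsymbol\chi_0,\boldsymbol\eta_0)$ by the choice of $Q$. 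Gronwall then yields $\Phi(T)\le e^{C_TT}\gamma_2^2(\boldsymbol\chi_0,\boldsymbol\eta_0)$, and combining with Step~1 gives \eqref{eq:relation} with $c'=c'(T)=e^{C_TT/2}$. For \eqref{eq:kr3}, note that for $\|f\|_L\le 1$ (Lipschitz constant $\le 1$) one has $|f(x)-f(y)|\le\rho(x,y)$ after the usual truncation, so
\[
<\chi_t-\eta_t,f>\;=\;\sum_{(i,j)}q_{ij}\bigl(f(r^i(t))-f(s^j(t))\bigr)\ \text{(with the sign bookkeeping)},
\]
whence $\sup_{\|f\|_L\le1}<\chi_t-\eta_t,f>^2\le\bigl(\sum q_{ij}\rho(r^i(t),s^j(t))\bigr)^2\le 2m\sum q_{ij}\|Z^{ij}(t)\|^2$ by Cauchy–Schwarz; taking $E\sup_{t\le T}$ and invoking the Gronwall bound on $\Phi$ gives \eqref{eq:kr3} with $c''=2m\,e^{C_TT}$. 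The only care needed throughout is the combinatorial translation between the signed objects $\chi^1-\chi^2$ and the real weights $a_i$ (and similarly $b_j$), so that the coupling $Q$ of $(\boldsymbol\chi_0,\boldsymbol\eta_0)$ — which is a pair of couplings, one of the positive parts and one of the negative parts — is correctly carried along; this is routine but must be written out to be sure the constants do not secretly depend on $N$.
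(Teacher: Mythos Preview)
Your overall strategy---push an optimal coupling of the initial pair forward along the two particle flows driven by a common Wiener sheet, then close a Gronwall loop---is precisely the paper's. But there is a genuine gap at the step where you assert $\Phi(0)=\gamma_2^2(\boldsymbol\chi_0,\boldsymbol\eta_0)$. By your own definition $\Phi(0)=\sum_{i,j}q_{ij}\|r_0^i-s_0^j\|^2$, whereas $\gamma_2$ is the Wasserstein distance for the \emph{bounded} cost $\rho=\min(1,\|\cdot\|)$: the optimal $Q$ you chose minimises $\sum q_{ij}\rho^2(r_0^i,s_0^j)$, not $\sum q_{ij}\|r_0^i-s_0^j\|^2$. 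Whenever the optimal plan pairs Euclidean-distant points (cost saturated at $1$ under $\rho$), $\Phi(0)$ can exceed $\gamma_2^2(\boldsymbol\chi_0,\boldsymbol\eta_0)$ by an arbitrary factor---take $\chi_0^1=\delta_0$, $\eta_0^1=\delta_R$ with $R$ large. Your Gronwall then produces a constant depending on the initial data, which is exactly what the lemma forbids and what would wreck the density extension of $\Psi$ (recall $M$ is complete only under $\gamma_2$, not under any unbounded-cost Wasserstein metric).

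The paper fixes this with a small but essential device: instead of applying It\^o to $\|Z^{ij}\|^2$, it uses
\[
\rho\bigl(r^i(t),s^j(t)\bigr)\ \le\ \rho\bigl(r_0^i,s_0^j\bigr)+\bigl\|\,r^i(t)-r_0^i-\bigl(s^j(t)-s_0^j\bigr)\bigr\|,
\]
squares, and estimates only the \emph{increment} in Euclidean norm. The initial contribution is then genuinely $\rho^2(r_0^i,s_0^j)$, and integrating against $Q$ and taking the infimum recovers $\gamma_2^2(\boldsymbol\chi_0,\boldsymbol\eta_0)$. For the Gronwall to close one must then have the drift and noise Lipschitz bounds feed back $\rho^2(r^i(s),s^j(s))$ rather than $\|Z^{ij}(s)\|^2$; this is where the \emph{boundedness} of $K$ assumed in Remark~\ref{rem:hypotheses} is actually used (bounded plus $\|\cdot\|$-Lipschitz implies $\rho$-Lipschitz). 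As a secondary remark, for the cross term $U(s^j,\chi_u)-U(s^j,\eta_u)$ the paper does not unfold the coupling as you do but simply invokes the Kantorovich--Rubinstein bound \eqref{eq:kr2}, which returns $\gamma_2^2(\boldsymbol\chi_u,\boldsymbol\eta_u)$ directly; this leads to two nested Gronwall applications instead of your single one, but either route works once the $\rho$ versus $\|\cdot\|$ issue above is handled.
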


The proof is relegated to Appendix \ref{app:extension}.\\

Using the previous result, it is now possible to extend the mapping
$\Psi$ from $M^{(f)}$ to $M$. While the representation of the
mapping $\Psi$ is explicit when $\boldsymbol\nu\in M^{(f)}$, it is no longer the case
when $\boldsymbol\nu \in M\setminus M^{(f)}$.

\begin{thm}\label{thm:extpsi}
The map $\Psi :=\boldsymbol\chi = (\chi_0^1,\chi_0^2) \mapsto \boldsymbol\chi=(\chi^1,\chi^2)$ from $M_{f}$ into $\M^{(f)}$
extends uniquely to a map from $M$  into $\M$ . Moreover, for any
$\boldsymbol\chi_0,\boldsymbol\eta_{0}\in M$,
$$
\gamma_{[0,T]}(\boldsymbol\chi,\boldsymbol\eta) \le  c' \gamma(\boldsymbol\chi_0,\boldsymbol\eta_0)
$$
and $\chi = \chi^1-\chi^2$ satisfies \eqref{eq:WSNSE}, that
is $\chi$ is a solution of the stochastic evolution equation
with initial condition $\chi_0$.
\end{thm}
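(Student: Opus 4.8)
The plan is to extend $\Psi$ from the dense subset $M^{(f)}$ to all of $M$ by a uniform-continuity/completeness argument, using the Lipschitz bound \eqref{eq:relation} of Lemma \ref{lem:extension} as the crucial estimate. First I would fix $\boldsymbol\chi_0 \in M$ and, invoking Lemma \ref{lem:polish}, choose a sequence $\boldsymbol\chi_0^{(n)} \in M^{(f)}$ with $\gamma_2(\boldsymbol\chi_0^{(n)}, \boldsymbol\chi_0) \to 0$. Since a convergent sequence is Cauchy, \eqref{eq:relation} gives $\gamma_{[0,T]}(\Psi(\boldsymbol\chi_0^{(n)}), \Psi(\boldsymbol\chi_0^{(m)})) \le c'\gamma_2(\boldsymbol\chi_0^{(n)}, \boldsymbol\chi_0^{(m)}) \to 0$, so $\{\Psi(\boldsymbol\chi_0^{(n)})\}$ is Cauchy in $(\M, \gamma_{[0,T]})$; by completeness of that space (established in the excerpt: $(\M, \gamma_{[0,T]})$ is complete) it has a limit, which we define to be $\Psi(\boldsymbol\chi_0)$. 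A second application of \eqref{eq:relation} shows the limit is independent of the approximating sequence (interleave two sequences), so $\Psi$ is well-defined on $M$, and passing to the limit in \eqref{eq:relation} yields the stated bound $\gamma_{[0,T]}(\boldsymbol\chi, \boldsymbol\eta) \le c'\gamma(\boldsymbol\chi_0, \boldsymbol\eta_0)$ for all $\boldsymbol\chi_0, \boldsymbol\eta_0 \in M$ (with the understanding that $\gamma$ here is $\gamma_2$, since the initial conditions are deterministic). Uniqueness of the extension is immediate from continuity and density.

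The remaining and more delicate point is to verify that $\chi = \chi^1 - \chi^2 = \Psi(\boldsymbol\chi_0)$ still satisfies the weak equation \eqref{eq:WSNSE}. The approach is to pass to the limit in the identity
$$
<\!\chi_t^{(n)}, f\!> - <\!\chi_0^{(n)}, f\!> - \int_0^t <\!\chi_s^{(n)}, L(\chi_s^{(n)}) f\!>\,ds = \int_0^t\!\!\int <\!\chi_s^{(n)}, \nabla f(\cdot)^\top \Gamma(\cdot, p)\!>\,w(dp,ds),
$$
which holds for each $n$ by Lemma \ref{lem:map}. Fix $f \in C_b^2(\R^d)$. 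The left-hand side terms: $<\!\chi_t^{(n)}, f\!> \to <\!\chi_t, f\!>$ follows from $\gamma_{[0,T]}$-convergence together with the observation that $f$ (being $C_b^2$) can be handled via its Lipschitz part after truncation, and more directly from \eqref{eq:kr3} which controls $\sup_t \sup_{\|f\|_L \le 1} <\!\chi_t^{(n)} - \chi_t, f\!>^2$ in $L^2$; the drift term needs the bilinear dependence of $L(\chi_s)f$ on $\chi_s$ through $U(x,\chi_s) = \int K(x,q)\chi_s(dq)$, where boundedness and the Lipschitz property of $K$ from Remark \ref{rem:hypotheses} give the needed convergence of $<\!\chi_s^{(n)}, L(\chi_s^{(n)})f\!>$ to $<\!\chi_s, L(\chi_s)f\!>$ uniformly in $s \in [0,T]$, after which dominated convergence handles the $ds$-integral. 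For the stochastic integral, I would use the Itô isometry for the Wiener sheet: the $L^2$-norm of the difference of the two stochastic integrals is bounded by $E\int_0^t \sum_{l}\int \left(<\!\chi_s^{(n)} - \chi_s, \nabla f(\cdot)^\top \Gamma(\cdot, p)_{\cdot l}\!>\right)^2 dp\,ds$, and the inner $dp$-integral is controlled using condition \eqref{eq:gamma} on $\Gamma$ (which bounds $\sum_{j,l}\int \{\Gamma_{jl}(r,p) - \Gamma_{jl}(q,p)\}^2\,dp$) together with the boundedness of $\nabla f$; this reduces the bound to a constant times $E\sup_{0\le s\le T}\gamma_2^2(\boldsymbol\chi_s^{(n)}, \boldsymbol\chi_s) = \gamma_{[0,T]}^2(\boldsymbol\chi^{(n)}, \boldsymbol\chi) \to 0$. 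Hence along a subsequence the stochastic integrals converge a.s., and we identify the limit, obtaining \eqref{eq:WSNSE} for $\chi$.

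The main obstacle I anticipate is the convergence of the stochastic-integral term, specifically getting the $dp$-integral of $<\!\chi_s^{(n)} - \chi_s, \nabla f(\cdot)^\top \Gamma(\cdot, p)\!>^2$ under control purely from \eqref{eq:gamma}: the hypothesis bounds the $L^2(dp)$-oscillation of $\Gamma$ in its first argument, so one must write $<\!\mu - \nu, g(\cdot, p)\!>$ for $g(x,p) = \nabla f(x)^\top \Gamma(x,p)$ in terms of an optimal coupling $Q \in \H$ achieving (nearly) the Wasserstein distance and then estimate $\int |g(x,p) - g(y,p)|^2\,dp$ pointwise in $(x,y)$ by $C\|x-y\|^2$ — combining the Lipschitz-in-$x$ bound from \eqref{eq:gamma} for the $\Gamma$ factor with boundedness of $\nabla f$ and its derivatives; some care is needed because $g$ itself need not be bounded in $p$, only its oscillation is controlled, so one should subtract a reference value $g(x_0, p)$ and use that $\rho \le 1$ makes the contribution of far-apart points negligible. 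A secondary but routine difficulty is bookkeeping the passage to a common subsequence along which all three limits (drift, stochastic integral, and the terminal/initial brackets) hold simultaneously and a.s.; this is handled by extracting subsequences in turn and noting the limit is already identified in $L^1$ or $L^2$, so the a.s.\ limit must coincide with it.
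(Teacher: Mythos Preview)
Your proposal is correct and follows essentially the same approach as the paper: extend $\Psi$ from $M^{(f)}$ to $M$ by density, the Lipschitz estimate \eqref{eq:relation}, and completeness of $(\M,\gamma_{[0,T]})$, then pass to the limit term by term in \eqref{eq:WSNSE} using \eqref{eq:kr3} for the brackets and the Lipschitz properties of $K$, $f$, $\nabla f$ for the drift. Your treatment of the stochastic integral via the It\^o isometry, an optimal coupling, and condition \eqref{eq:gamma} is in fact more detailed than the paper's own proof, which simply asserts that ``all righthand side terms of \eqref{eq:WSNSE} tend to zero'' without spelling out the $dp$-integral estimate; the argument you sketch for that step is the natural one and goes through.
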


The proof is relegated to Appendix \ref{app:pfthmpsi}.\\

\begin{rem}\label{rem:psi}
While the last theorem tells us that there is at least one solution
to the weak stochastic evolution equation starting from some signed
measure $\nu$, one cannot deduce yet that there is a unique
solution, nor that the mass is conserved, that is, if $\chi_0
\in SM(m_1,m_2)$, then $\chi_t$ also belongs to
$SM(m_1,m_2)$ for all $t\ge 0$. In order to prove these results, one
needs to introduce another mapping.
\end{rem}

\section{Fixed point representation and mass conservation}\label{sec:mappingS}

The plan is the following. First, we start by defining a mapping
that maintains, through time, the Hahn-Jordan decomposition of the
initial signed measure. Then, we show that it has  a unique fixed
point and that the latter satisfies \eqref{eq:WSNSE}. Finally, we
prove the uniqueness of the solution of \eqref{eq:WSNSE}, using
a monotonicity condition.

\begin{defn}\label{def:SMapping}

Let $\boldsymbol\nu = (\nu^1,\nu^2) \in M$ be given. Consider the operator $S=(S^1,S^2)$
acting on $\M_{\boldsymbol\nu} = \{\boldsymbol\mu \in \M; \boldsymbol\mu_0 = \boldsymbol\nu\}$, and defined by
$$
(S\boldsymbol\mu)^\tau_t = \nu^\tau \circ r^{-1}(t,\mu,\cdot),\quad \tau =1,2,
$$
where for any $\boldsymbol\mu \in \M_{\boldsymbol\nu}$ and any $x \in\R^d$, $r(t,\mu,x)$, with $\mu=\mu^1-\mu^2$,
is the unique solution of the following It\^{o} equation:
\begin{equation}\label{eq:sde-r}
\left\{
\begin{array}{lll}
dr(t)&=&  \int K(r(t),p)\mu_t(dp) dt + \int\Gamma(r(t),p)w(dp,dt),\\
r(0) &=& x.
\end{array}
\right.
\end{equation}
\end{defn}

Note that the measurability of the mapping $x \mapsto r(t,\mu,x)$
follows easily from the properties in Remark \ref{rem:hypotheses},
ensuring that $(S\boldsymbol\mu)^\tau_t$ is well-defined. In fact,
from the proof of Lemma \ref{lem:diffeo} below, the mapping $x
\mapsto r(t,\mu,x)$ is a homeomorphism, thus it is measurable.

In other words, for all $t\ge 0$ and for any bounded and measurable $f$ on $\R^d$,
$$
<(S\boldsymbol\mu)^\tau_t,f  > = \int f\{r(t,\mu,x)\}\nu^\tau (dx), \quad
\tau=1,2.
$$
In particular, $<(S\mu)_t,f  > = \int f\{r(t,\mu,x)\}\nu (dx)$.

The proof of existence and uniqueness of $r$ is similar to the proof of Lemma \ref{lem:sys-sde}, so it is omitted.\\

%\begin{rem}\label{rem:Sfinite}
%It follows that if $\nu \in M^{(f)}$ is such that
%$$
%\nu^\tau = \sum_{i=1}^N \max(\pm a_i,0) \delta_{x_i},
%$$
%then
%$$
%(S\mu)^\tau_t = \sum_{i=1}^N \max(\pm a_i,0) \delta_{r(t,\mu,x_i)}.
%$$
%\end{rem}

The following lemma is essential and confirms that the Hahn-Jordan
decomposition is preserved by the mapping $S$.

\begin{lemma}\label{lem:diffeo}
If $\boldsymbol\nu = \left(\nu^1,\nu^2\right)$ is the Hahn-Jordan decomposition
of $\nu$, then $(S\boldsymbol\mu)_t =
\left((S\boldsymbol\mu)_t^1,(S\boldsymbol\mu)_t^2\right)$ is the Hahn-Jordan decomposition
of $(S\boldsymbol\mu)_t^1-(S\boldsymbol\mu)_t^2$.
\end{lemma}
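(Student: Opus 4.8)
The plan is to exploit the fact that the flow $x\mapsto r(t,\mu,x)$ generated by the SDE \eqref{eq:sde-r} is, for each fixed $t$ and each fixed $\mu$, a homeomorphism (indeed a diffeomorphism) of $\R^d$ onto itself. The crucial point is that the noise term $\int\Gamma(r(t),p)w(dp,dt)$ is \emph{common} to all initial points $x$ — the Wiener sheet $w$ does not depend on $x$ — and the drift $\int K(r(t),p)\mu_t(dp)$ is, for a \emph{frozen} path $\mu$, a time-dependent vector field that is Lipschitz in the spatial variable (by the Lipschitz hypothesis on $K$ in Remark \ref{rem:hypotheses} together with the fact that $\mu_t=\mu_t^1-\mu_t^2$ has total variation bounded by $m_1+m_2$). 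Standard SDE flow theory (e.g.\ Kunita) then gives that $\omega$-almost surely, $x\mapsto r(t,\mu,x)$ is a bijection of $\R^d$ with continuous inverse; this is exactly what the reference to ``the proof of Lemma \ref{lem:diffeo}'' in the preceding paragraph is alluding to, so I would establish this homeomorphism property first, as Step 1.

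Step 2 is to recall how the Hahn-Jordan decomposition interacts with pushforwards under a bijection. If $\nu=\nu^1-\nu^2$ is the Hahn-Jordan decomposition, there is a Borel set $P\subset\R^d$ (a Hahn set) with $\nu^1$ concentrated on $P$ and $\nu^2$ concentrated on $P^c$, and $\nu^1\perp\nu^2$. Writing $\phi=r(t,\mu,\cdot)$ for the homeomorphism from Step 1, the pushforwards satisfy $(S\boldsymbol\mu)^1_t=\nu^1\circ\phi^{-1}$ and $(S\boldsymbol\mu)^2_t=\nu^2\circ\phi^{-1}$. Since $\phi$ is a bijection, $\nu^1\circ\phi^{-1}$ is concentrated on $\phi(P)$ and $\nu^2\circ\phi^{-1}$ is concentrated on $\phi(P^c)=\phi(P)^c$, and these two sets are still complementary Borel sets (Borel because $\phi$ is a homeomorphism, hence $\phi(P)$ and its complement are Borel). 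Therefore $(S\boldsymbol\mu)^1_t\perp(S\boldsymbol\mu)^2_t$, and $\phi(P)$ serves as a Hahn set for the signed measure $(S\boldsymbol\mu)^1_t-(S\boldsymbol\mu)^2_t$. By uniqueness of the Hahn-Jordan decomposition of a signed measure, $\left((S\boldsymbol\mu)^1_t,(S\boldsymbol\mu)^2_t\right)$ must be \emph{the} Hahn-Jordan decomposition of their difference.

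The main obstacle is Step 1: making rigorous the claim that $x\mapsto r(t,\mu,x)$ is almost surely a homeomorphism, uniformly controlling the exceptional null set over the continuum of initial points $x$. This is where one needs the differentiability of $K$ in its first argument (also assumed in Remark \ref{rem:hypotheses}) to run a Kolmogorov-type continuity argument on the two-point motion and on the Jacobian flow, together with the growth control on $G$ from \eqref{eq:gamma} and the hypotheses on $G$ to handle the martingale part; the spatial Lipschitz constant of the frozen drift is $C(m_1+m_2)$, which is finite and uniform in $t$, so no difficulty arises there. Once the flow property is in hand, Step 2 is essentially soft measure theory. I would also remark that since the identity holds for each fixed $t$ on a full-measure event (possibly depending on $t$), and both sides are continuous in $t$ by construction of $S$ on $\M_{\boldsymbol\nu}$, the conclusion in fact holds simultaneously for all $t\ge 0$ on a single full-measure event.
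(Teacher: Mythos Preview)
Your proposal is correct and follows essentially the same route as the paper: the paper invokes \citet[Theorem 4.5.1]{Kunita:1990} to obtain that $x\mapsto r(t,\mu,x)$ is a homeomorphism (your Step~1), and then appeals to a separately stated Proposition~\ref{prop:bijection} asserting that measurable injections preserve Hahn--Jordan decompositions (your Step~2, which you prove inline via the Hahn set $\phi(P)$). The only minor difference is that the paper's auxiliary proposition is phrased for measurable injections rather than homeomorphisms, but since the flow is in any case shown to be a homeomorphism, this distinction is immaterial here.
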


\begin{proof} The conditions in Remark \ref{rem:hypotheses} ensure that the ``local characteristics'' of the semimartingale
$B(x,t)+M(x,t)$, with $B(x,t) = \int_0^t K(x,p)\mu_s(dp)ds$ and $M(x,t) = \int_0^t \int \Gamma(x,p)w(dp,ds)$ satisfy the hypotheses of
\citet[Theorem 4.5.1]{Kunita:1990}. It follows that $x \mapsto r(t,\mu,x)$ is a
homeomorphism and therefore injective.
The results then follows from Proposition \ref{prop:bijection} in
Appendix \ref{app:aux}. \end{proof}

The next result, similar to Lemma  \ref{lem:extension}, is needed
to show that $S$ has a unique fixed point in $\M_{\boldsymbol\nu}$.

\begin{lemma}\label{lem:contraction}
For an any $T>0$, there exist $c'=c'(T)>0$, independent of $\boldsymbol \nu \in
M$, such that, if both $\boldsymbol\mu$ and $\boldsymbol\eta$ belong to $\M_{\boldsymbol\nu}$, then they satisfy
\begin{equation}\label{eq:contraction}
\gamma_{[0,T]}^2 (S\boldsymbol\mu, S\boldsymbol\eta) \leq c' \int_0^T E
\gamma_{2}^{2}(\boldsymbol\mu_t,\boldsymbol\eta_t)dt \le C' T \gamma_{[0,T]}^2
(\boldsymbol\mu,\boldsymbol\eta).
\end{equation}
\end{lemma}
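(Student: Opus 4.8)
The goal is to estimate $\gamma_{[0,T]}^2(S\boldsymbol\mu,S\boldsymbol\eta)$ in terms of the discrepancy between $\boldsymbol\mu$ and $\boldsymbol\eta$. The natural device is a coupling: for fixed $x\in\R^d$, let $r(t,\mu,x)$ and $r(t,\eta,x)$ solve \eqref{eq:sde-r} driven by the \emph{same} Wiener sheet $w$ with the \emph{same} initial point $x$, for $\mu=\mu^1-\mu^2$ and $\eta=\eta^1-\eta^2$ respectively. Since $(S\boldsymbol\mu)^\tau_t=\nu^\tau\circ r^{-1}(t,\mu,\cdot)$ and similarly for $\boldsymbol\eta$, the pushforward of $\nu^\tau$ under the pair map $x\mapsto(r(t,\mu,x),r(t,\eta,x))$ is a joint representation of $((S\boldsymbol\mu)^\tau_t,(S\boldsymbol\eta)^\tau_t)$, so
$$
W_2^2\bigl((S\boldsymbol\mu)^\tau_t,(S\boldsymbol\eta)^\tau_t\bigr)\le \int \rho^2\bigl(r(t,\mu,x),r(t,\eta,x)\bigr)\,\nu^\tau(dx)\le \int \|r(t,\mu,x)-r(t,\eta,x)\|^2\,\nu^\tau(dx),
$$
and summing over $\tau=1,2$ gives $\gamma_2^2((S\boldsymbol\mu)_t,(S\boldsymbol\eta)_t)\le\int\|r(t,\mu,x)-r(t,\eta,x)\|^2(\nu^1+\nu^2)(dx)$. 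Taking $\sup_{t\le T}$, then expectation, and using Fubini reduces everything to a single pathwise SDE comparison: control $E\sup_{t\le T}\|r(t,\mu,x)-r(t,\eta,x)\|^2$ uniformly in $x$.

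For that comparison, I would subtract the two integral equations for $r(\cdot,\mu,x)$ and $r(\cdot,\eta,x)$. Writing $\Delta(t)=r(t,\mu,x)-r(t,\eta,x)$, the drift difference splits as
$$
\int K(r(t,\mu,x),p)\,\mu_t(dp)-\int K(r(t,\eta,x),p)\,\eta_t(dp),
$$
which I would further decompose into $\int[K(r(t,\mu,x),p)-K(r(t,\eta,x),p)]\mu_t(dp)$ — bounded by $C\|\Delta(t)\|\,m$ by the Lipschitz hypothesis of Remark \ref{rem:hypotheses} — plus $\int K(r(t,\eta,x),p)(\mu_t-\eta_t)(dp)$. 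The second term is where the $\gamma_2$-distance between $\boldsymbol\mu_t$ and $\boldsymbol\eta_t$ enters: since $K$ is bounded and Lipschitz in $p$, the function $p\mapsto K(r(t,\eta,x),p)$ has bounded Lipschitz-type norm, so $|\int K(r(t,\eta,x),p)(\mu^\tau_t-\eta^\tau_t)(dp)|$ is controlled by a bounded-Lipschitz distance between $\mu^\tau_t$ and $\eta^\tau_t$, hence by $W_1(\mu^\tau_t,\eta^\tau_t)\le (m^2)^{1/2}W_2(\mu^\tau_t,\eta^\tau_t)$ via the inequality $W_1\le (m^2)^{(p-1)/p}W_p$ already recorded in Section \ref{sec:description}; summing over $\tau$ this is $\lesssim\gamma_2(\boldsymbol\mu_t,\boldsymbol\eta_t)$. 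The martingale difference $\int_0^t\int[\Gamma(r(s,\mu,x),p)-\Gamma(r(s,\eta,x),p)]w(dp,ds)$ has, by \eqref{eq:gamma}, quadratic variation bounded by $C_\Gamma^2\int_0^t\|\Delta(s)\|^2ds$. Squaring, applying Doob's $L^2$ inequality to the martingale part and Cauchy–Schwarz to the drift part, and taking expectations yields
$$
E\sup_{s\le t}\|\Delta(s)\|^2\le c_1\int_0^t E\sup_{u\le s}\|\Delta(u)\|^2\,ds+c_2\int_0^t E\,\gamma_2^2(\boldsymbol\mu_s,\boldsymbol\eta_s)\,ds,
$$
with $c_1,c_2$ depending only on $C$, $C_\Gamma$, $m=\max(m_1,m_2)$ and $T$, but \emph{not} on $x$ or $\boldsymbol\nu$. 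Grönwall's inequality then gives $E\sup_{s\le t}\|\Delta(s)\|^2\le c_2 e^{c_1 T}\int_0^t E\gamma_2^2(\boldsymbol\mu_s,\boldsymbol\eta_s)\,ds$ uniformly in $x$.

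Combining the two displays — integrating the Grönwall bound against $(\nu^1+\nu^2)$, which has total mass $m_1+m_2$ — produces
$$
\gamma_{[0,T]}^2(S\boldsymbol\mu,S\boldsymbol\eta)=E\sup_{t\le T}\gamma_2^2((S\boldsymbol\mu)_t,(S\boldsymbol\eta)_t)\le c'\int_0^T E\,\gamma_2^2(\boldsymbol\mu_t,\boldsymbol\eta_t)\,dt,
$$
which is the first inequality of \eqref{eq:contraction}; the second follows immediately by bounding the integrand by $\sup_{t\le T}\gamma_2^2(\boldsymbol\mu_t,\boldsymbol\eta_t)$ inside the expectation, i.e.\ $\int_0^T E\gamma_2^2(\boldsymbol\mu_t,\boldsymbol\eta_t)\,dt\le T\,\gamma_{[0,T]}^2(\boldsymbol\mu,\boldsymbol\eta)$, with $C'=c'$. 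One subtlety worth flagging: the $\sup_{t\le T}$ must be taken \emph{outside} the Grönwall estimate, so I would first establish the pathwise/expectation bound with the running supremum $\sup_{s\le t}\|\Delta(s)\|^2$ already built in (which is why Doob's inequality is applied before Grönwall), and only afterwards integrate in $x$. The main obstacle is precisely this interchange of supremum, expectation, and the $x$-integration together with verifying that the constants from the bounded-Lipschitz estimate on $K(r(t,\eta,x),\cdot)$ are genuinely uniform in $x$ and $t$; the boundedness of $K$ and its Lipschitz constant take care of this, but it must be spelled out to keep $c'$ independent of $\boldsymbol\nu$.
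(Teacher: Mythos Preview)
Your proposal is correct and follows essentially the same route as the paper: the paper couples $(S\boldsymbol\mu)^\tau_t$ and $(S\boldsymbol\eta)^\tau_t$ by pushing forward a joint representation $Q^\tau$ of $(\nu^\tau,\nu^\tau)$ under $(y,z)\mapsto(r(t,\mu,y),r(t,\eta,z))$ and then reuses verbatim the drift-split/Doob/Gr\"onwall estimates from the proof of Lemma~\ref{lem:extension}; your choice of the diagonal coupling $x\mapsto(r(t,\mu,x),r(t,\eta,x))$ is simply the natural specific $Q^\tau$ that makes the initial-discrepancy term vanish immediately, and the remaining steps (Lipschitz bound on the $K$-in-$x$ term, the Kantorovich--Rubinstein-type estimate \eqref{eq:kr2} for the cross-measure drift term, Doob on the $\Gamma$-martingale via \eqref{eq:gamma}, then Gr\"onwall) match the paper's argument.
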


The proof is given in Appendix \ref{app:cont}. %The following
%extension of the preceding result is straightforward.
%
%
%\com{Non necessaire!}
%
%\begin{lem}\label{lem:contraction2}
%For an any $T>0$, there exist $c'=c'(T)>0$, independent of $\nu_1,
%\nu_2  \in M$, such that
%\begin{equation}\label{eq:contraction2}
%\gamma_{[0,T]}^2(S^{\nu_1}\mu,S^{\nu_2}\eta )\leq c'
%\gamma_{2}^{2}(\mu_0,\eta_0)+ c' \int_0^T E
%\gamma_{2}^{2}(\mu_t,\eta_t)dt.
%\end{equation}
%\end{lem}

One can now prove that $S$ has a unique fixed point.

\begin{thm}\label{thm:contraction}
Let $\boldsymbol\nu \in M$ be given. Then the mapping $S$ has a unique fixed
point $\boldsymbol\mu \in \M_{\boldsymbol\nu}$ given by $\boldsymbol\mu = \Psi(\boldsymbol\nu)$, and for any
$\boldsymbol\eta\in \M_{\boldsymbol\nu}$, $S^n \boldsymbol\eta$ converges to $\boldsymbol\mu$ as $n\to\infty$.
\end{thm}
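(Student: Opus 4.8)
The plan is to exhibit $S$ as a contraction on a suitable complete metric space and then invoke the Banach fixed point theorem, combined with an identification of the fixed point.

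First I would restrict attention to a small time horizon. By Lemma \ref{lem:contraction}, for any $\boldsymbol\mu,\boldsymbol\eta\in\M_{\boldsymbol\nu}$ one has $\gamma_{[0,T]}^2(S\boldsymbol\mu,S\boldsymbol\eta)\le C'T\,\gamma_{[0,T]}^2(\boldsymbol\mu,\boldsymbol\eta)$, so choosing $T_0>0$ with $C'T_0<1$ makes $S$ a strict contraction on $\M_{\boldsymbol\nu}$ over $[0,T_0]$. The space $\M_{\boldsymbol\nu}=\{\boldsymbol\mu\in\M;\ \boldsymbol\mu_0=\boldsymbol\nu\}$ is a closed subset of the complete metric space $(\M,\gamma_{[0,T]})$ (the constraint $\boldsymbol\mu_0=\boldsymbol\nu$ is preserved under $\gamma_{[0,T]}$-limits, since $\gamma_{[0,T]}$-convergence forces convergence at $t=0$), hence itself complete. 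The Banach fixed point theorem then yields a unique fixed point $\boldsymbol\mu\in\M_{\boldsymbol\nu}$ on $[0,T_0]$, and for every $\boldsymbol\eta\in\M_{\boldsymbol\nu}$, $S^n\boldsymbol\eta\to\boldsymbol\mu$ in $\gamma_{[0,T_0]}$.

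Next I would bootstrap from $[0,T_0]$ to arbitrary $T$. Since the contraction constant $C'$ in Lemma \ref{lem:contraction} is independent of $\boldsymbol\nu$, one can repeat the argument on $[T_0,2T_0]$ starting from the value of the fixed point at time $T_0$, then on $[2T_0,3T_0]$, and so on; concatenating these pieces produces a unique fixed point of $S$ on $[0,T]$ for every $T>0$, and iterates converge on each subinterval, hence uniformly on $[0,T]$. Then I would identify the fixed point with $\Psi(\boldsymbol\nu)$. When $\boldsymbol\nu\in M^{(f)}$, say $\boldsymbol\nu=(\sum_i\max(a_i,0)\delta_{r_0^i},\sum_i\max(-a_i,0)\delta_{r_0^i})$, the particle system \eqref{eq:sys-part} defining $\Psi(\boldsymbol\nu)$ has its drift $\sum_j a_j K(r^i(t),r^j(t))\,dt = \int K(r^i(t),p)\,\chi_t(dp)\,dt$ driven precisely by $\mu_t=\Psi(\boldsymbol\nu)_t$ itself, so $r^i(t)=r(t,\mu,r_0^i)$ solves \eqref{eq:sde-r} with $\mu=\Psi(\boldsymbol\nu)^1-\Psi(\boldsymbol\nu)^2$; pushing $\nu^\tau$ forward by $r(t,\mu,\cdot)$ therefore returns $\Psi(\boldsymbol\nu)^\tau_t$, i.e.\ $S\Psi(\boldsymbol\nu)=\Psi(\boldsymbol\nu)$. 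By uniqueness of the fixed point, $\boldsymbol\mu=\Psi(\boldsymbol\nu)$ on $M^{(f)}$. For general $\boldsymbol\nu\in M$, approximate by $\boldsymbol\nu_k\in M^{(f)}$ with $\gamma_2(\boldsymbol\nu_k,\boldsymbol\nu)\to0$ (density, Lemma \ref{lem:polish}); then $\Psi(\boldsymbol\nu_k)\to\Psi(\boldsymbol\nu)$ in $\gamma_{[0,T]}$ by Theorem \ref{thm:extpsi}, while a stability estimate for $S$ in $\boldsymbol\nu$ (obtained by the same Gronwall/It\^o computation underlying Lemma \ref{lem:contraction}, now comparing $S$-dynamics over different initial measures) shows the fixed points depend continuously on $\boldsymbol\nu$, giving $\boldsymbol\mu=\Psi(\boldsymbol\nu)$ in general.

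The main obstacle I anticipate is the last identification step for $\boldsymbol\nu\in M\setminus M^{(f)}$: one must argue that $S\Psi(\boldsymbol\nu)=\Psi(\boldsymbol\nu)$ without an explicit particle representation of $\Psi(\boldsymbol\nu)$. The cleanest route is to pass to the limit in the identity $S\Psi(\boldsymbol\nu_k)=\Psi(\boldsymbol\nu_k)$, which requires two continuity facts: continuity of $\Psi$ from $(M,\gamma_2)$ to $(\M,\gamma_{[0,T]})$, already available from Theorem \ref{thm:extpsi}; and continuity of the map $\boldsymbol\mu\mapsto S\boldsymbol\mu$ jointly in the base measure $\boldsymbol\nu$ and the argument $\boldsymbol\mu$, which reduces to a standard estimate on how the flow $r(t,\mu,\cdot)$ of \eqref{eq:sde-r} and the pushforward measures $\nu^\tau\circ r^{-1}(t,\mu,\cdot)$ depend on $(\mu,\nu^\tau)$ — a Gronwall argument using the Lipschitz bounds on $K$ and $\Gamma$ from Remark \ref{rem:hypotheses}, essentially the same computation as in the proof of Lemma \ref{lem:contraction}. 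Everything else is routine once the contraction is in hand.
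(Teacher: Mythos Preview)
Your argument is correct, but the paper takes a shorter route. Rather than shrinking the time horizon and bootstrapping, the paper exploits the \emph{integral} inequality in Lemma~\ref{lem:contraction}, namely $\gamma_{[0,t]}^2(S\boldsymbol\mu,S\boldsymbol\eta)\le c'\int_0^t E\gamma_2^2(\boldsymbol\mu_s,\boldsymbol\eta_s)\,ds$: iterating gives $\gamma_{[0,T]}^2(S^n\boldsymbol\mu,S^n\boldsymbol\eta)\le (c'T)^n/n!\cdot\gamma_{[0,T]}^2(\boldsymbol\mu,\boldsymbol\eta)$, so $S^n\boldsymbol\eta$ is Cauchy for \emph{every} $T$ at once, and uniqueness of the fixed point follows from Gronwall applied to the same integral inequality. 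This sidesteps your concatenation step, which is more delicate than you indicate: on $[T_0,2T_0]$ you must restart $S$ from the random value $\boldsymbol\mu_{T_0}$ (whereas $S$ in Definition~\ref{def:SMapping} is set up for deterministic $\boldsymbol\nu\in M$), and you still have to argue that the global iterate $S^n\boldsymbol\eta$ on $[0,T]$ matches the concatenation of local fixed points --- bookkeeping you leave implicit. For the identification $\boldsymbol\mu=\Psi(\boldsymbol\nu)$, the paper simply asserts that ``using It\^o's formula, it is easy to check that $\Psi(\boldsymbol\nu)$ is a fixed point of $S$''; for discrete $\boldsymbol\nu$ this is exactly your particle computation, and your density-plus-continuity argument makes the passage to general $\boldsymbol\nu$ explicit where the paper is terse.
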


\begin{proof} First, using It\^{o}'s formula, it is easy to check that
$\Psi(\boldsymbol\nu)$ is a fixed point of $S$.

It follows from Lemma \ref{lem:contraction} that $S$ is continuous
and that for any $\boldsymbol\mu\in \M_{\boldsymbol\nu}$, the sequence $\boldsymbol\mu_n = S^n \boldsymbol\mu$ is
Cauchy. Since the space $\M_{\boldsymbol\nu}$ is complete and the mapping $S$ is
continuous, $\boldsymbol\mu_n$ converges to $\boldsymbol\mu\in \M_{\boldsymbol\nu}$ which must be a
fixed point of $S$. Hence the set of fixed points of $S$ is not
empty. Next, it follows easily from Lemma \ref{lem:contraction} that
there are no more than one fixed point. For if $\boldsymbol\mu$ and $\boldsymbol\eta$ are
fixed points, then
$$
E\disp \sup_{0\leq t\leq T}\gamma_{2}^{2}(\boldsymbol\mu_t, \boldsymbol\eta_t)\leq c'
\int_0^T E \gamma_{2}^{2}(\boldsymbol\mu_t,\boldsymbol\eta_t)dt,
$$
so using Gronwall's inequality, one may conclude that $\boldsymbol\mu=\boldsymbol\eta$.
\end{proof}

Suppose that for any given $\boldsymbol\nu \in M$ and $\boldsymbol\mu \in \M_{\boldsymbol\nu}$, the
signed measure valued process $\chi = \chi^1-\chi^2$, with
$(\chi^1, \chi^2) \in\M_{\boldsymbol\nu}$, satisfies the weak (linear) evolution
equation
\begin{eqnarray}
<\chi_t,g> &=& <\nu,g> + \int_0^t  <\chi_s, L(\mu_s) g >ds \nonumber\\
&& \qquad +  \int_0^t \int <\chi_s, \nabla
g(\cdot)^\top\Gamma(\cdot ,p)> w(dp,ds),
\label{eq:signed-weak-linear}
\end{eqnarray}
for any nice $g$. \\

First note that such a solution exists. In fact, $\chi =
\chi^1-\chi^2$, with $(\chi^1, \chi^2) = S{\boldsymbol\mu}$, is a solution of
\eqref{eq:signed-weak-linear}, by a simple application of It\^o's
formula applied to $g(r(t))$ when $g$ is sufficiently smooth.\\

Note also that equation \eqref{eq:signed-weak-linear} still makes sense when $\chi$ is a trajectory in the space of Schwartz's tempered distributions, for any
test function $g$. Let $H_p$ denote the Sobolev space of order $p$, as defined in  \citet{Dawson/Vaillancourt:1995}.

The proof of the next lemma is done in  Appendix \ref{app:dual}. Before stating it, let $ L_{\mu_t}^* $ be the (Schwartz distributional) adjoint operator to $ L_{\mu_t} $ in the classical functional analytic sense, i.e., for $\chi\in \H_{-q}$, $\phi\in H_q$, $<\chi,L_{\mu_t}\phi> = <L_{\mu_t}^*\chi,\phi>$. See, e.g., \citet{Rudin:1973}.

\begin{lemma}\label{lem:dual} Suppose that for all $t\in [0,T]$ and any given $p\ge 1$, there exists $q\ge p$ and a positive constant $C_p$ so that
for every solution $\chi \in H_{-p}$ of \eqref{eq:signed-weak-linear},
\begin{equation}\label{eq:mono}
E\left\{<\chi,L_{\mu_t}^*\chi>_{-q} \right\} + \int_{\R^d} \left\| \nabla^* \Gamma(\cdot,x)\chi\right\|^2_{-q}dx \le C_p \|\chi\|^2_{-q}.
\end{equation}
Then \eqref{eq:signed-weak-linear} has at most one solution in trajectorial sense starting  from $\nu \in H_{-p}$.

\end{lemma}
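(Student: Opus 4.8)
The statement is a uniqueness result for the linear equation \eqref{eq:signed-weak-linear}, so the natural route is an $L^2$-type energy estimate in a negative Sobolev space, applied to the difference of two solutions.  First I would observe that if $\chi$ and $\tilde\chi$ are two trajectorial solutions of \eqref{eq:signed-weak-linear} starting from the same $\nu\in H_{-p}$, then the difference $\zeta_t = \chi_t - \tilde\chi_t$ is again a solution of the \emph{same} linear equation but with zero initial condition, i.e.\ $\zeta_0 = 0$ and
$$
<\zeta_t,g> = \int_0^t <\zeta_s, L(\mu_s)g>\,ds + \int_0^t\int <\zeta_s, \nabla g(\cdot)^\top\Gamma(\cdot,p)>\,w(dp,ds).
$$
The whole point is then to show $\zeta \equiv 0$.

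Next I would pass to the Sobolev scale.  Since $\zeta_s \in H_{-p}$ and the hypothesis \eqref{eq:mono} is formulated at the level $-q$ with $q\ge p$, the embedding $H_{-p}\hookrightarrow H_{-q}$ lets me work in $H_{-q}$.  The idea is to apply the infinite-dimensional It\^o formula to the functional $\zeta \mapsto \|\zeta\|_{-q}^2$ (this is where one needs the equation to ``make sense'' distributionally, as the excerpt already remarked).  Writing the drift term as $<\zeta_s,L(\mu_s)g> = <L^*_{\mu_s}\zeta_s, g>$ and the noise coefficients as $\nabla^*\Gamma(\cdot,x)\zeta_s$, It\^o's formula yields
$$
d\|\zeta_t\|_{-q}^2 = 2<\zeta_t, L^*_{\mu_t}\zeta_t>_{-q}\,dt + \int_{\R^d}\bigl\|\nabla^*\Gamma(\cdot,x)\zeta_t\bigr\|_{-q}^2\,dx\,dt + dN_t,
$$
where $N_t$ is a local martingale (the stochastic integral against $w$).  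Taking expectations — after a localization argument to kill the martingale part, using a sequence of stopping times and Fatou/monotone convergence to remove them — and invoking the monotonicity bound \eqref{eq:mono} gives
$$
E\|\zeta_t\|_{-q}^2 \le 2C_p \int_0^t E\|\zeta_s\|_{-q}^2\,ds.
$$
Gronwall's lemma, together with $E\|\zeta_0\|_{-q}^2 = 0$, forces $E\|\zeta_t\|_{-q}^2 = 0$ for every $t\in[0,T]$, hence $\zeta_t = 0$ a.s.\ for each $t$; continuity of trajectories in $H_{-q}$ then upgrades this to $\zeta \equiv 0$ on $[0,T]$, proving uniqueness.

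**The main obstacle.**  The delicate point is not the Gronwall step but the justification of the It\^o formula for $\|\cdot\|_{-q}^2$ applied to the distribution-valued process $\zeta$: one must know a priori that $\zeta$ takes values in $H_{-q}$ with $t\mapsto \|\zeta_t\|_{-q}$ having enough integrability (e.g.\ $E\sup_{t\le T}\|\zeta_t\|_{-q}^2 < \infty$), that the drift $L^*_{\mu_t}\zeta_t$ lives in a suitable dual space pairing with $\zeta_t$, and that $\int_{\R^d}\|\nabla^*\Gamma(\cdot,x)\zeta_t\|_{-q}^2\,dx$ is finite — all of which rest on the regularity assumptions on $K$ and $\Gamma$ in Remark~\ref{rem:hypotheses} and on the choice of $q$ relative to $p$.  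A clean way to handle this is to use a finite-dimensional Galerkin-type projection onto the first $n$ eigenfunctions of the Sobolev inner product (or to test \eqref{eq:signed-weak-linear} against a complete orthonormal system of $H_q$), derive the identity and the Gronwall estimate at the projected level where It\^o's formula is elementary, and then pass to the limit $n\to\infty$ using \eqref{eq:mono} to control the error uniformly.  The localization of the stochastic integral and this passage to the limit are the steps requiring care; everything else is routine.
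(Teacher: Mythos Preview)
Your proposal is correct and follows essentially the same route as the paper. The paper's proof tests \eqref{eq:signed-weak-linear} against each $\phi$ in a complete orthonormal system of $H_q$, applies the scalar It\^o formula to $e^{-2tC_p}\langle\chi_t,\phi\rangle^2$, takes expectations, and sums; this is exactly the ``clean way'' you describe in your discussion of the obstacle, and the integrating-factor device is equivalent to your Gronwall step.
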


As a consequence of the previous results, one obtains the uniqueness of the solution of the
stochastic evolution equation \eqref{eq:WSNSE} starting from
$\nu\in SM(m_1,m_2)$.

\begin{thm}\label{thm:uniqueness}
Suppose that $\nu\in SM(m_1,m_2)$ has Hahn-Jordan
decomposition $\boldsymbol\nu =(\nu^1,\nu^2)\in M$. Then, under the conditions stated in Remark \ref{rem:hypotheses} and Lemma \ref{lem:dual},
the stochastic evolution equation
\eqref{eq:WSNSE}, with initial condition $\nu$,  has a unique solution $\chi$ which preserves the mass, that is, for every $t\ge 0$,
there holds $\chi_t \in SM(m_1,m_2)$ and its Hahn-Jordan
decomposition is $\boldsymbol\chi_t$, where $\boldsymbol\chi = \Psi(\boldsymbol\nu)$.
\end{thm}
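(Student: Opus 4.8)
The plan is to combine the three main ingredients already established: (i) the existence result (Theorem~\ref{thm:extpsi}), which gives $\chi = \chi^1-\chi^2$ with $\boldsymbol\chi=\Psi(\boldsymbol\nu)$ as a solution of \eqref{eq:WSNSE}; (ii) Theorem~\ref{thm:contraction}, which identifies $\Psi(\boldsymbol\nu)$ as the unique fixed point of the mapping $S$; and (iii) Lemma~\ref{lem:diffeo}, which says that $S$ preserves the Hahn--Jordan decomposition. The mass-conservation half of the statement is then almost immediate: since $\boldsymbol\chi = \Psi(\boldsymbol\nu)$ is the fixed point of $S$, we have $\boldsymbol\chi_t = (S\boldsymbol\chi)_t$ for every $t$, and by Lemma~\ref{lem:diffeo} (applied with $\boldsymbol\mu = \boldsymbol\chi$, using that $\boldsymbol\nu$ is \emph{the} Hahn--Jordan decomposition of $\nu$) the pair $\boldsymbol\chi_t = (\chi^1_t,\chi^2_t)$ is the Hahn--Jordan decomposition of $\chi_t = \chi^1_t-\chi^2_t$. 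Since $\chi^1_t \in M(m_1)$ and $\chi^2_t\in M(m_2)$ with $\chi^1_t \perp \chi^2_t$, this exactly says $\chi_t \in SM(m_1,m_2)$ with the decomposition claimed; in particular $\chi^1_t(\R^d)=m_1$ and $\chi^2_t(\R^d)=m_2$ for all $t\ge 0$.

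The uniqueness half is the substantive part. First I would show that \emph{any} solution $\tilde\chi = \tilde\chi^1-\tilde\chi^2$ of the nonlinear equation \eqref{eq:WSNSE} starting from $\nu$ is also a solution of the \emph{linear} equation \eqref{eq:signed-weak-linear} when one freezes $\mu = \tilde\chi$ there; this is a tautology, since \eqref{eq:WSNSE} is precisely \eqref{eq:signed-weak-linear} with $\mu_s$ replaced by $\chi_s$. By Theorem~\ref{thm:contraction} the fixed point $\boldsymbol\chi = \Psi(\boldsymbol\nu)$ also solves \eqref{eq:signed-weak-linear} with the same frozen coefficient $\mu = \chi$ (recall $(\chi^1,\chi^2) = S\boldsymbol\chi$). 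Now invoke Lemma~\ref{lem:dual} with this fixed $\mu_t = \chi_t$: the monotonicity hypothesis \eqref{eq:mono} is assumed to hold, so \eqref{eq:signed-weak-linear} has at most one trajectorial solution in $H_{-p}$ starting from $\nu$. It remains to check that both $\chi$ and $\tilde\chi$ live in the right negative Sobolev space $H_{-p}$ for some $p$; this follows because finite-mass signed measures on $\R^d$ embed into $H_{-p}$ for $p$ large enough, and the construction via $\Psi$ keeps the total variation controlled (by $m_1+m_2$) uniformly in $t$. Hence $\tilde\chi = \chi$, giving uniqueness.

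One delicate point deserves attention and is where I expect the main obstacle: the argument above shows that any solution $\tilde\chi$ of \eqref{eq:WSNSE} coincides with $\chi$, but strictly speaking Lemma~\ref{lem:dual} guarantees uniqueness for the \emph{linear} equation with a \emph{prescribed} coefficient $\mu$. Here the coefficient is the unknown solution itself, so one must argue carefully that if $\tilde\chi$ and $\chi$ are two solutions of the nonlinear equation, then they satisfy a common linear equation. The clean way is: suppose $\tilde\chi$ solves \eqref{eq:WSNSE}; then $(\tilde\chi^1,\tilde\chi^2) := S\boldsymbol{\tilde\chi}$ where $\boldsymbol{\tilde\chi}$ is \emph{some} element of $\M_{\boldsymbol\nu}$ with $\tilde\chi = \tilde\chi^1 - \tilde\chi^2$ — but this requires knowing a priori that $\tilde\chi$ arises from a pair in $\M_{\boldsymbol\nu}$, which is part of the hypothesis in Definition~\ref{def:NS}. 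Granting that, $\tilde\chi$ solves \eqref{eq:signed-weak-linear} with $\mu = \tilde\chi$; to compare with $\chi$ one applies Lemma~\ref{lem:dual} to the \emph{difference}, treating the frozen coefficient as $\tilde\chi$ for $\tilde\chi$ and noting that $\chi$ solves the same linear equation with coefficient $\mu=\chi$. So a genuine comparison needs a Gronwall-type estimate on $\gamma_{[0,T]}(\boldsymbol{\tilde\chi},\boldsymbol\chi)$ rather than a bare appeal to linear uniqueness; Lemma~\ref{lem:contraction} supplies exactly such an estimate, $\gamma_{[0,T]}^2(S\boldsymbol{\tilde\chi},S\boldsymbol\chi) \le c'\int_0^T E\gamma_2^2(\boldsymbol{\tilde\chi}_t,\boldsymbol\chi_t)\,dt$, and since both pairs are fixed points of $S$ this closes via Gronwall, forcing $\boldsymbol{\tilde\chi}=\boldsymbol\chi$ and hence $\tilde\chi=\chi$. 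The role of Lemma~\ref{lem:dual} is then to rule out the existence of solutions of \eqref{eq:WSNSE} \emph{not} of the form $S\boldsymbol\mu$ — i.e., to show every trajectorial solution of the linear equation is the one produced by the flow — which is needed precisely because the space of signed measures is not complete and one cannot simply quote a generic uniqueness theorem.
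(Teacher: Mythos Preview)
Your mass-conservation argument is correct and matches the paper's: $\boldsymbol\chi=\Psi(\boldsymbol\nu)=S\boldsymbol\chi$ together with Lemma~\ref{lem:diffeo} gives that $\boldsymbol\chi_t$ is the Hahn--Jordan decomposition of $\chi_t$, so $\chi_t\in SM(m_1,m_2)$.

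The uniqueness argument, however, has a structural gap. You correctly diagnose the difficulty in your third paragraph: two nonlinear solutions $\chi$ and $\tilde\chi$ solve \emph{different} linear equations (with frozen coefficients $\chi$ and $\tilde\chi$ respectively), so Lemma~\ref{lem:dual} cannot compare them directly. But your proposed fix is circular: you invoke Lemma~\ref{lem:contraction} and close via Gronwall ``since both pairs are fixed points of $S$'' --- yet showing that the arbitrary solution $\tilde{\boldsymbol\chi}$ is a fixed point of $S$ is precisely what remains to be proved. The final sentence gestures at the right idea (``rule out solutions not of the form $S\boldsymbol\mu$'') but does not carry it out.

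The paper resolves this by comparing the arbitrary solution $\tilde\chi$ not with $\chi=\Psi(\boldsymbol\nu)$ but with $(S\tilde{\boldsymbol\chi})^1-(S\tilde{\boldsymbol\chi})^2$. Both of these solve the \emph{same} linear equation \eqref{eq:signed-weak-linear} with the \emph{same} frozen coefficient $\mu=\tilde\chi$: the first tautologically, the second by It\^o's formula applied to $g(r(t,\tilde\chi,x))$. Their difference starts at $0$, so Lemma~\ref{lem:dual} forces $\tilde\chi_t=(S\tilde{\boldsymbol\chi})_t^1-(S\tilde{\boldsymbol\chi})_t^2$ as signed measures. Now Lemma~\ref{lem:diffeo} says $(S\tilde{\boldsymbol\chi})_t$ is the Hahn--Jordan decomposition of its own difference, and Proposition~\ref{prop:jordan} --- which you do not invoke, and which is the step that genuinely exploits the mass constraint $\tilde{\boldsymbol\chi}_t\in M(m_1,m_2)$ --- upgrades the equality of differences $\tilde\chi^1_t-\tilde\chi^2_t=(S\tilde{\boldsymbol\chi})_t^1-(S\tilde{\boldsymbol\chi})_t^2$ to the equality of \emph{pairs} $\tilde{\boldsymbol\chi}_t=(S\tilde{\boldsymbol\chi})_t$. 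Only at this point is $\tilde{\boldsymbol\chi}$ known to be a fixed point of $S$, and Theorem~\ref{thm:contraction} then gives $\tilde{\boldsymbol\chi}=\Psi(\boldsymbol\nu)=\boldsymbol\chi$.
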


\begin{proof}

For every $\boldsymbol\nu \in M$, Theorem \ref{thm:extpsi} yields the existence of a solution $\mu = \mu^1-\mu^2$  to \eqref{eq:WSNSE}
starting from $\nu$, where $\boldsymbol\mu =( \mu^1,\mu^2) =\Psi(\boldsymbol\nu)$.
In order to show uniqueness, it suffices to prove that $ \bmu = S\bmu$ invariably ensues, that is,
any solution is a fixed point of $S$, which we already know has a unique fixed point by Theorem \ref{thm:contraction}.

First, it is easy to check that for any $g\in C_b^2(\R^d)$,
$(S\bmu)^1-(S\bmu)^2$ satisfies \eqref{eq:signed-weak-linear}, for any given $ \bmu $.

Now take $ \bmu $ to be any solution to \eqref{eq:WSNSE}.
To show that $(S\bmu)_t^1-(S\bmu)_t^2 = \mu_t$, notice that
$\eta_t = (S\bmu)_t^1-(S\bmu)_t^2 - \mu_t$  satisfies
\eqref{eq:signed-weak-linear} with $\bnu\equiv 0$. Applying Lemma  \ref{lem:dual}, one may conclude that $P\{ \eta_t = 0\}=1$ holds for every $t\ge 0$.
Hence $\mu_t = (S\bmu)_t^1-(S\bmu)_t^2$ for every $t\ge 0$ with probability one.

It follows from Lemma \ref{lem:diffeo} that $(S\bmu)_t $ is the
Hahn-Jordan decomposition of $(S\bmu)_t^1-(S\bmu)_t^2$. Hence, by
Proposition \ref{prop:jordan}, we have $\bmu_t = (S\bmu)_t$ since
$\mu_t = (S\bmu)_t^1-(S\bmu)_t^2$. Therefore, one may conclude that
the stochastic evolution equation has a unique solution $\mu$, with
Hahn-Jordan decomposition $\bmu=S\bmu$. This completes the proof.
\end{proof}

%\subsection{Discussion of \citet{Kotelenez:2010} and \citet{Kotelenez/Seadler:2011,Kotelenez/Seadler:2012}}

We can at this point discuss some of the more recent papers of
Kotelenez and his school, as requested by one of the referees, whom
we thank for drawing them to our attention. The main statement in
\citet[Theorem 3.3]{Kotelenez:2010} is similar to our Theorem
\ref{thm:uniqueness} except for the uniqueness. However the proof in
\citet[Theorem 3.3]{Kotelenez:2010} is incomplete since the
Hahn-Jordan decomposition (corresponding to our Lemma
\ref{lem:diffeo}) and stated in  \citet[Lemma 3.1]{Kotelenez:2010},
is proven only for discrete measures  in \citet[Corollary
2.6]{Kotelenez:2010}.

Next, in \citet{Kotelenez/Seadler:2011}, the main statement is
Theorem 3.5,   which is our Theorem \ref{thm:extpsi}, with the
additional claim that the Hahn-Jordan decomposition is preserved.
However the proof given there is incorrect since they do not prove
that the sequence of signed measures is convergent with respect to
the distance $\lambda$ defined by \eqref{eq:KSdistance}. It is an
example of the kind of error we mentioned in Remark
\ref{rem:KSdistance}.

Finally, in \citet{Kotelenez/Seadler:2012}, the main statement is
Theorem 3.5,  which is an extension of \citet[Theorem
3.3]{Kotelenez:2010}, with an additional claim about the
conservation of the mass of the Hahn-Jordan decomposition. The proof
of the latter is incomplete, since it is based on their Corollary
A.7, whose proof is missing, claimed to be a direct consequence of
\citet[Theorem A.6]{Kotelenez/Seadler:2012}. Furthermore, the proof
of the Hahn-Jordan decomposition in \citet[Theorem
3.5]{Kotelenez/Seadler:2012} is based on their Lemma 3.2, whose
proof is circular with their Corollary 3.3 upon which it implicitly
relies. More precisely, their Lemma 3.2 is a statement about the
dominance of the distance $\lambda$ for the mapping $S$ with respect
to the initial measure. To be proven, one absolutely needs a result
about the Hahn-Jordan decomposition of $S\boldsymbol\mu$, which is
stated as a Corollary of Lemma 3.2.

\section{Two-dimensional vorticity equations as special cases}\label{sec:relation}

The classical Navier-Stokes equations of fluid dynamics for the two dimensional velocity field $v(t,x)=(v_{1},v_{2})(t,x)\in \R^2$
of an incompressible viscid planar fluid submitted to a pressure field $p(t,x)\in \R^2$, with prescribed initial velocity $v_0$, are given by
\begin{equation} \label{eq:NS0}
\begin{array}{rll}
\frac{\partial }{\partial t}v+(v\cdot \nabla )v + \nabla p -\nu
\Delta v &=&0,
\qquad (t,x) \in  \R^1 \times \R^2, \\
& &\\
(\nabla\cdot v)(t,x)=\frac{\partial}{\partial x_{1}}v_1
+\frac{\partial}{\partial x_{2}}v_2 &=& 0, \qquad (t,x) \in  \R^1 \times \R^2,
\end{array}
\end{equation}
with initial and boundary conditions
\begin{equation} \label{eq:NS0cond}
\begin{array}{rll}
v(x,0) &=& v_0(x) ,  \qquad x \in \R^2, \\
& &\\
\lim_{|x|\to\infty } v(t,x) &=& 0, \qquad t \in \R^1.
\end{array}
\end{equation}

Here the constant $\nu\geq0$ denotes the kinematic viscosity coefficient and we write $x=(x_{1},x_{2})\in {\mathbb R}^2$.
$\nabla=(\frac{\partial }{\partial x_{1}},\frac{\partial}{\partial x_{2}})$ is the gradient and
$\Delta=\nabla\cdot\nabla$ is the Laplace operator.

A great deal of information about the solution $v$ can be gleaned from a
scalar parameter called the vorticity (or rotation) $\omega $ of the two dimensional flow, defined as
$$
\omega := {\rm rot} \; v = \frac{\partial }{\partial
x_{1}}v_{2}-\frac{\partial }{\partial x_{2}}v_{1}.
$$

Using \eqref{eq:NS0cond} and treating \eqref{eq:NS0} formally, one obtains the
corresponding (pressure invariant) two-dimensional vorticity equations
\begin{equation} \label{eq:vnse2d}
\begin{array}{rll}
\frac{\partial}{\partial t}\omega+(v\cdot\nabla)\omega-\nu
\Delta\omega &=&0,
\qquad (t,x) \in  \R^1 \times \R^2, \\
& &\\
(\nabla\cdot v)(t,x) &=& 0, \qquad (t,x) \in  \R^1 \times \R^2.
\end{array}
\end{equation}

Introducing the operator $\nabla^{\bot}=(-\frac{\partial}{\partial
x_{2}},\frac{\partial }{\partial x_{1}})$ yields, by virtue of $\nabla\cdot
v=0$, the classical two dimensional Biot-Savard formula
\begin{equation}\label{2drep}
v(t,x)=\int(\nabla^{\bot}g)(x-y)\omega(t,y)dy,
\end{equation}
with $g(r)=h(\|r\|)$, with $h(s) = \frac{1}{2\pi}\ln{s} $, $s>0$.

Making sense of \eqref{eq:NS0}, \eqref{eq:vnse2d} and even
\eqref{2drep} requires the precise identification of the spaces of
values wherein lie $v$ and $\omega$. There is an extensive
literature on the conditions of existence and uniqueness of solution
for both equations \eqref{eq:NS0} and \eqref{eq:vnse2d}, when the initial data is restricted to either a
nice enough function space or a small subset of the space of all
Borel signed measures. See \citet{Chorin/Marsden:1993},
\citet{Ben-Artzi:2003} and \citet{Gallagher/Gallay:2005} for precise
statements as well as some historical background. The existence of
solutions for the rougher initial data selected arbitrarily within
the space of signed measures, a legitimate requirement for the sake
of good statistical modeling of particle behaviour at the
microscopic level, is harder and was first accomplished rigorously
for \eqref{eq:vnse2d} by \citet{Cottet:1986}. Uniqueness of solution
for \eqref{eq:vnse2d} took even longer to achieve and is due to
\citet{Ben-Artzi:2003} and \citet{Gallagher/Gallay:2005}. In both
cases, the solution is only shown to have continuous trajectories
away from 0, a consequence of both the singularity at 0 of $g$ and
the use of the total variation norm to induce a manageable topology
on the state space. To our knowledge mass conservation has never
been proved rigorously for these equations.

\begin{rem}
Conditions in \eqref{eq:gamma} cover the vorticity equations \eqref{eq:vnse2d}
only after the singularity at 0 of kernel $g$
is removed through smoothing, which we explain next, using the same notation as in \cite{Marchioro/Pulvirenti:1982}.
Let $g_{\varepsilon}(r) = h_{\varepsilon}(\|r\|)$, $0<\varepsilon
\leq 1$ where
 $h_{\varepsilon} \in C_{b}^{2}({\mathbb R})$ is selected so as to
satisfy $h_{\varepsilon}(s)=h(s)$ for $\varepsilon \leq s
\leq\frac{1}{\varepsilon}$ ,
$h_{\varepsilon}'(0)=0$, and for all $s>0$, $|h_{\varepsilon}'(s)|
\leq |h'(s)|$, $| h_{\varepsilon}''(s)| \leq |h''(s)|$.
Such a filter of smooth approximations is easy to build,
e.g.  see \citet{Leonard:1985}. For $r\neq 0$, set
$K_{\varepsilon}(r)=(\nabla^{\bot}g_{\varepsilon})(r)$$, r\in
{\mathbb R}^2$. It follows from the assumption
$h_{\varepsilon}'(0)=0$ that $K_{\varepsilon}(0)=0$ makes
$K_{\varepsilon}$ continuous on $\mathbb{R}^2$. Moreover, since
$h_{\varepsilon}'(0)=0$ and $h_{\varepsilon}''$ is bounded by
$C_\varepsilon$ (say), it follows that
$|K_{\varepsilon}(r)-K_{\varepsilon}(q)| \le 2C_\varepsilon |r-q|$,
that is $K_{\varepsilon}$ is Lipschitz.  Finally, note that the monotonicity condition \eqref{eq:mono}
holds for these equations by \citet{Mikulevicius/Rozovskii:2005}[Proposition 2.12],
provided that in addition, $\Gamma$ and $K_\varepsilon$ are bounded.
\end{rem}

The regularized or smoothed vorticity equations are then given by
\eqref{eq:vnse2d} with $v$ no longer being a solution to
\eqref{eq:NS0} but rather of the form \eqref{2drep} with
$\nabla^{\bot}g$ replaced by the approximating
$\nabla^{\bot}g_{\varepsilon}$. Since $\nabla \cdot K_\varepsilon
\equiv 0$, their weak form may be written as
\begin{equation}\label{eq:weak-smooth-NS}
\left\{
\begin{array}{rll}
d<\!\omega_t,f\!> &=& <\!\omega_t, L_\varepsilon(\omega_t) f \!>dt \\
\\
U_\varepsilon (r,\omega_t) &=& \int K_\varepsilon(r-q)\omega_t(dq),\\
\end{array}
\right.
\end{equation}
where, for any $x\in\mathbb{R}^2$,
$$
L_\varepsilon (\omega_t)f(x) = \sum_{j=1}^{2} \partial_{x_j}f(x)(U_\varepsilon)_j (x,\omega_t) +\nu \Delta f(x),
$$
with $<\!\omega,f\!>$ is just the integral of $f$ with respect to the measure (or density) $\omega$.
The advantage of \eqref{eq:weak-smooth-NS} over \eqref{eq:NS0} or \eqref{eq:vnse2d} is that it makes perfect sense
for finite signed measures $\omega_t$ and offers a ready-made stochastic version under the guise of our \eqref{eq:WSNSE}.

%\com{Notre traitement couvre-t-il vraiment (5.2)?}

This reformulation allows us to provide through Theorem
\ref{thm:uniqueness} the first rigorous statements and proofs in the
matters of existence, uniqueness, mass conservation and continuity
at the origin for arbitrary initial data for both the regularized
vorticity equations \eqref{eq:weak-smooth-NS} and their stochastic
counterparts analyzed in \cite{Marchioro/Pulvirenti:1982} and
\cite{Kotelenez:1995}, among others. It must be pointed out that not
only did Marchioro and Pulvirenti claim to give explicit conditions
for equations \eqref{eq:weak-smooth-NS} to possess one and only one
solution, they did so even when it was perturbed by independent
Brownian motions. They also claimed to have proved mass conservation
(see the statement of their Theorem 2.1) but, just as in their
proofs of the two previous statements,  the use of an incomplete
state space mars their argument and the same difficulty affects the
proof of their Theorem 3.1 in the stochastic case. Specifically,
\cite{Marchioro/Pulvirenti:1982} defined a mapping $\mathcal{S}$ from
$SM(m_1,m_2)$ into the space of continuous functions
$C([0,T);SM(m_1,m_2))$ by setting
$$
(\mathcal{S}\mu)_t (A) =  \int P_t^{\mu}(A|y)\nu(dy)
$$
for all Borel sets $A\subset \R^2$, where $P_t^{\mu}(\cdot|y)$ are the
transition probabilities of the diffusion process, solution of the
stochastic differential equation
$$
d x_t = U_\epsilon(x_t,\mu_t)dt + \sigma dW_t,\qquad x_0=y,
$$
which is a particular case of equation \eqref{eq:sde-r}.
They claimed that it is obvious that $(\mathcal{S}\mu)_t\in
SM(m_1,m_2)$ if $\nu \in SM(m_1,
m_2)$; this is shown to be false in Appendix \ref{app:disprove}.
Next, the proof of their Theorem 2.1 relies on a fixed point theorem
for operators on complete spaces in an essential way. Unfortunately, their
statements are widely quoted and used in the literature.
% (e.g. by \cite{Meleard:2000}).

In a similar fashion, \cite{Kotelenez:1995} claimed to be able to
extend Marchioro and Pulvirenti's treatment when the stochastic
terms involve Brownian sheets as the driving random environment, a
more realistic description from the physical standpoint since the
energy conservation law of physics is then respected at the
microscopic level. However several of his arguments involve
repeated use of the completeness of spaces that are simply not
complete. A further claim by Kotelenez to the effect that the
conservation of total positive and negative vortices follows from
his construction turns out to hinge on the (incorrect) completeness
assumption already mentioned. For his stochastic version of the weak
regularized vorticity equations \eqref{eq:weak-smooth-NS}, Kotelenez
selected the kernel $\Gamma$ appearing in our equations
\eqref{eq:WSNSE} and \eqref{eq:sys-part} amongst a family of simple
gaussian kernels with values inside the set of diagonal matrices.
They are easily shown to satisfy our conditions \eqref{eq:gamma}.

For the sake of both papers and the ensuing literature, we provide in the appendix
corrected statements and proofs of some of their claims.

We must finally draw the reader's attention to a clever
transformation introduced by \cite{Jourdain:2000} which enabled him
to transfer the problem of manipulating sequences of signed measures
to that of associated probability measures, thus obtaining existence
and uniqueness of solutions to some viscous scalar conservation laws
by a direct application of the propagation of chaos results of
\cite{Sznitman:1991}. This idea allowed \cite{Meleard:2001} to
provide a proof of uniqueness for the vorticity equation
\eqref{eq:vnse2d} under some fairly unrestricted conditions on the
initial measure.

Whether our methods extend to the original equation \eqref{eq:vnse2d} with the explosive kernel $g$ above remains an open question.

\appendix
\section{Auxiliary results}\label{app:aux}

This appendix contains the proofs of the more technical results of
this paper. By organizing the material of this paper in such a
fashion, our hope is that the reader will get a better overview of
the subject at hand without getting bogged down in small details
that could hamper his understanding of the connexions between the
various results and contributions.

%The next result, which is new in the literature, shows that if the
%positive and negative initial parts $\nu^\tau$ have discrete
%disjoint supports, then their image by the mapping $\Psi$ remain
%apart forever, i.e., if $\chi = \Psi(\nu)$, then  and $\chi_t =
%(\chi^1_t,\chi_t^2)$ forms the Hahn-Jordan decomposition of
%$\chi^\sigma_t$. Note that the case where the initial parts do not
%have discrete support, remains an important open problem,
%especially in view of the fact that many in the physics literature
%assume it to be true.
%
%
%
%
%
%\begin{thm}\label{distance}
%Suppose that the initial conditions satisfy
%$$
%E\sum_{i,j=1\; i\neq j}^{N}\ \left|\;  \ln \|  r_{i}
%(0)-r_{j}(0)\| \; \right| <\infty.
%$$
%Then, for every $T>0$,
%$$
%E\sum_{i,j=1,\; i\neq j}^{N}\disp \sup_{t\leq T}\ \left| \;  \ln\|
%r_{i}(t)-r_{j}(t)\| \;  \right| <\infty.
%$$

%
%
%
%
%
%
%In particular, if $\chi = \Psi(\nu)$ and $\nu^\sigma =
%\nu^1-\nu^2$ has Hahn-Jordan decomposition $\nu = (nu^1,\nu^2)$,
%then $\chi^1_t$ and $\chi^2_t$ are respectively the positive and
%negative parts in the Hahn-Jordan decomposition of the signed
%measure $\chi^\sigma_t = \chi^1_t - \chi^2_t$.
%\end{thm}
%
%The proof is relegated to Appendix \ref{app:distance}.

\begin{prop}\label{prop:bijection}
If $T$ is a measurable injection on $\R^d$, then $\bmu\mapsto
\bmu\circ T$ preserves singularity, i.e., if $\bmu = (\mu^1,\mu^2)$ is
the Hahn-Jordan decomposition of some given signed measure
$\mu$, then $\bmu\circ T = (\mu^1\circ T,\mu^2\circ T)$ is the
Hahn-Jordan decomposition of $\mu\circ T$.
\end{prop}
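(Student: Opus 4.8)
The plan is to reduce the statement to the defining property of the Hahn–Jordan decomposition, namely that $(\mu^1,\mu^2)$ is the Hahn–Jordan decomposition of $\mu$ if and only if $\mu^1 \perp \mu^2$ and $\mu = \mu^1 - \mu^2$; equivalently, there is a Borel set $P$ with $\mu^1(\R^d\setminus P) = 0$ and $\mu^2(P) = 0$. So the whole task is to show that measurable injectivity of $T$ transports such a separating set through the pullback. Recall the pullback convention in force here: for a measure $\nu$, $(\nu\circ T)(A) = \nu(T(A))$ for Borel $A$ — wait, more carefully, in this paper the notation $\nu\circ r^{-1}(t,\mu,\cdot)$ is a pushforward, so $\bmu\circ T$ should be read consistently with that usage; I will take $(\nu\circ T)(A) := \nu(T^{-1}(A))$ if $T$ is the map being composed, i.e. the image measure, and note that for an \emph{injection} the two readings coincide on the range of $T$ up to the behaviour on the complement, which is where injectivity is exactly what is needed.

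First I would fix $P$, a Borel separating set for $\mu$, so that $\mu^1$ is carried by $P$ and $\mu^2$ by $P^c$. The candidate separating set for $\mu\circ T$ is $Q := T(P)$ (or $T^{-1}(P)$, depending on the direction of composition fixed above). The key point is that, since $T$ is injective, $T(P)$ and $T(P^c)$ are disjoint, so $Q$ and its relation to $T(P^c)$ give the required separation: $(\mu^1\circ T)$ is carried by $Q$ because $\mu^1$ is carried by $P$, and $(\mu^2\circ T)$ vanishes on $Q$ because $\mu^2$ vanishes on $P$ and the injectivity prevents mass of $\mu^2$ (living on $P^c$) from being relocated onto $T(P)$. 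One then checks $(\mu\circ T) = (\mu^1\circ T) - (\mu^2\circ T)$, which is immediate from linearity of the pushforward, and concludes by the uniqueness of the Hahn–Jordan decomposition that $(\mu^1\circ T, \mu^2\circ T)$ is indeed it.

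The one technical subtlety — and the main obstacle — is measurability of images: if $T$ is merely Borel measurable and injective, $T(P)$ need not be Borel in general, although by the Lusin–Souslin theorem a Borel injection does map Borel sets to Borel sets on a standard Borel space such as $\R^d$, so this is safe here; alternatively one avoids images entirely by working with $T^{-1}$ of a separating set and exploiting injectivity only through the set-theoretic identity $T^{-1}(A)\cap T^{-1}(B) = \emptyset$ whenever $A\cap B=\emptyset$ together with $T^{-1}(A\cup B) = T^{-1}(A)\cup T^{-1}(B)$. I would phrase the argument in this second way to keep it elementary: define $\tilde P$ to be any Borel set separating $\mu^1,\mu^2$, observe that $\mu^1\circ T$ is supported in $T^{-1}(\tilde P)$ while $\mu^2\circ T$ is supported in $T^{-1}(\tilde P^c) = \R^d \setminus T^{-1}(\tilde P)$, and these two sets are complementary, hence disjoint — no injectivity needed for \emph{this} reading. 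Injectivity is exactly the hypothesis that makes the \emph{forward} reading work and is presumably why the authors state it; in the write-up I would simply match whichever reading of $\bmu\circ T$ is used in Definition \ref{def:SMapping} and Lemma \ref{lem:diffeo}, then invoke uniqueness of Hahn–Jordan to finish.
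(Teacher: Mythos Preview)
Your core argument --- take a Hahn set $P$ for $(\mu^1,\mu^2)$, set $Q=T(P)$, and use injectivity of $T$ to get both $T(P)\cap T(P^c)=\emptyset$ and $T^{-1}(T(A))=A$ --- is exactly the paper's proof: it takes disjoint $A^1,A^2$ carrying $\mu^1,\mu^2$ respectively, sets $B^\tau=T(A^\tau)$, and checks $(\mu^i\circ T)(B^\tau)=\mu^i(T^{-1}(T(A^\tau)))=\mu^i(A^\tau)$ together with $B^1\cap B^2=\emptyset$.

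Two remarks on the surrounding discussion. First, your Lusin--Souslin observation is a genuine technical point the paper passes over in silence; it is indeed needed to know that $T(A^\tau)$ is Borel. Second, the convention in force throughout (see the proof itself and the definition of $S$) is the pushforward $(\nu\circ T)(A)=\nu(T^{-1}(A))$, so your alternative ``preimage'' route using $T^{-1}(\tilde P)$ does not fit: the image measures live on the codomain of $T$, and the natural separating set there really is $T(P)$. Injectivity is therefore essential to the argument, not a dispensable convenience; your hedging on this point should be dropped in the final write-up.
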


\begin{proof} Let $A^1, A^2$ be disjoint Borel sets such that $\mu^1(A^1) =
m_1$, $\mu^1(A^2) = 0$,  $\mu^2(A^1) = 0$ and $\mu^2(A^2) = m_2$.
Set $B^\tau = T(A^\tau)$, $\tau=1,2$. Then, for $\tau=1,2$,
$$
\mu^1\circ T(B^\tau) = \mu^1\left\{T^{-1}\left(B^\tau\right)\right\}
= \mu^1\left(A^\tau\right)
$$
and
$$
\mu^2\circ T(B^\tau) = \mu^2\left\{T^{-1}\left(B^\tau\right)\right\}
= \mu^2\left(A^\tau\right).
$$
Next, remark that $B^1\cap B^2 = \emptyset$ since $T$ is an
injection. Hence the result.

\end{proof}

\begin{prop}\label{prop:jordan}
 Let $\bmu=(\mu^1,\mu^2), \bnu = (\nu^1,\nu^2)\in M$ be such that $\mu = \mu^1-\mu^2$ equals  $\nu=\nu^1 - \nu^2$.
 If $\bnu$ is the Hahn-Jordan decomposition of $\nu$,
then  $\bmu = \bnu$.
\end{prop}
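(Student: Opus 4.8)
The plan is to exploit the well-known minimality of the Jordan decomposition among all ways of writing a signed measure as a difference of two non-negative measures, and then to promote the resulting inequality of measures to an equality by using the hypothesis that all the masses are prescribed, namely $\bmu,\bnu\in M(m_1,m_2)$.

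First I would fix a Hahn decomposition $\R^d = P\cup N$ associated with $\nu$, so that $P\cap N=\emptyset$ and, for every Borel set $A$, $\nu^1(A)=\nu(A\cap P)$ and $\nu^2(A)=-\nu(A\cap N)$. Using $\nu=\mu^1-\mu^2$ and the non-negativity of $\mu^2$, for any Borel set $A$ one has
$$
\nu^1(A)=\nu(A\cap P)=\mu^1(A\cap P)-\mu^2(A\cap P)\le \mu^1(A\cap P)\le \mu^1(A),
$$
so that $\nu^1\le\mu^1$ as measures; the symmetric computation on $N$ gives $\nu^2\le\mu^2$.

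Next I would observe that $\mu^1-\nu^1$ is then a well-defined non-negative finite Borel measure whose total mass is $\mu^1(\R^d)-\nu^1(\R^d)=m_1-m_1=0$, precisely because both $\bmu$ and $\bnu$ belong to $M(m_1,m_2)$. Hence $\mu^1-\nu^1$ is the zero measure, i.e.\ $\mu^1=\nu^1$; applying the same argument to the second coordinate yields $\mu^2=\nu^2$, and therefore $\bmu=\bnu$.

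I do not anticipate a genuine obstacle here: the only step requiring care is the passage from the inequality $\nu^1\le\mu^1$ to the equality $\nu^1=\mu^1$, and this is exactly where the constraint that both decompositions share the prescribed total masses $m_1$ and $m_2$ is indispensable — without it the statement is false, since one may add the same finite non-negative measure to both $\mu^1$ and $\mu^2$ and still represent $\nu$.
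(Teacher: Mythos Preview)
Your proof is correct and follows essentially the same approach as the paper: both arguments fix a Hahn decomposition for $\nu$ and exploit the prescribed total masses $m_1,m_2$. The only cosmetic difference is that the paper evaluates $\mu^1,\mu^2$ on the positive and negative sets to conclude that $(\mu^1,\mu^2)$ is itself a Hahn--Jordan decomposition of $\nu$ and then invokes uniqueness of that decomposition, whereas you establish the pointwise inequality $\nu^i\le\mu^i$ (the standard minimality of the Jordan decomposition) and conclude equality directly from the mass constraint; your route is if anything slightly more direct.
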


 \begin{proof} Let $P_1$ and $P_2$ be the positive and negative
sets of $\nu$. Then $\nu^1(P_1)=m_1$, $\nu^2(P_2)=m_2$ and
$\nu^1 (P_2)=0 = \nu^2(P_1)$. It follows that
$$
\mu^1(P_1)-\mu^2(P_1) = \mu(P_1) = \nu(P_1) =
\nu^1(P_1)-\nu^2(P_1) = m_1
$$
and
$$
\mu^1(P_2)-\mu^2(P_2) = \nu^1(P_2)-\nu^2(P_2) = -m_2.
$$
Since $\mu^\tau (\R^d) = m_\tau \ge \mu^\tau(P_\tau)$ for
$\tau=1,2$, it follows that $\mu^1(P_1)=m_1$, $\mu^1(P_2)=0$,
$\mu^2(P_1)=0$ and $\mu^2(P_2)=m_2$. Therefore $\bmu = (\mu^1,\mu^2)$
is also the Hahn-Jordan decomposition of $\mu=\nu$.
Because the uniqueness of the Hahn-Jordan decomposition, one may conclude that $\bmu = \bnu$. \end{proof}

Recall that the minimal Lipschitz constant  for a function $f:\R^d\mapsto
\mathbb{R}$ is defined by
\begin{equation}\label{eq:lipschitz}
{\|f\|}_{L} = \sup_{r\neq q \in
\mathbb{R}^d}\frac{|f(r)-f(q)|}{\rho(r,q)}.
\end{equation}
It follows from the Kantorovich-Rubinstein Theorem
\citep[Theorem 11.8.2]{Dudley:1989} that for any $\mu,\nu\in
M(m)$,
$$
W_1(\mu,\nu) = m\sup_{{\|f\|}_L \le 1} \left|<\mu-\nu,f> \right|.
$$

If $\bmu=(\mu^1,\mu^2), \bnu =(\nu^1,\nu^2)\in M(m_1, m_2)$ then, by denoting $\mu=
\mu^1-\mu^2$, $\nu=\nu^1-\nu^2$ and $m= \max(m_1,m_2)$, note that
\begin{equation}\label{eq:kr1}
\sup_{{\|f\|}_L \le 1} \left|<\mu-\nu,f> \right| \le
\gamma_1(\bmu,\bnu)
\end{equation}
and
\begin{equation}\label{eq:kr2}
\sup_{{\|f\|}_L \le 1} <\mu-\nu ,f>^2 \; \le 2m^2
\gamma_2^2 (\bmu,\bnu).
\end{equation}

%\section{Proof of Lemma \ref{lem:sys-sde} } \label{app:kol}

\section{Proof of the main results}

\begin{proof}[Proof of Lemma \ref{lem:sys-sde}]\label{app:kol}

 For any adapted and $P\otimes \lambda$ measurable
stochastic process $q=\left(q^{1},\ldots ,q^N \right)^\top \in
C([0,T]; \R^{dN})$, set
$$
\ Q_{q}(t)=\sum_{i=1}^{N}a_{i}\delta_{q^{i}(t)}
$$
and define the mapping $q \mapsto F(q) = \hat q$, where, for all
$i=1,\ldots, d$,  and every $t\ge 0$,
$$
\hat q^{i}(t)= r^{i}(0)+\int_{0}^{t}\int
K(q^{i}(s),p)Q_{q}(dp,s)ds +\int_{0}^{t}\int
\Gamma(q^{i}(s),p)w(dp,ds).
$$
One wants to show the existence and uniqueness by Picard's
iteration, using the same technique as in
\citet{Ethier/Kurtz:1986}.

To this end, for any adapted  $C([0,T]; \R^{dN})$-valued random
variables $q_1,q_2$, one needs to estimate
$\left\|\widehat{q}_{1}^{i}(t)-\widehat{q}_{2}^{i}(t)\right\|$ for
all $i=1,\ldots, N$. First, set
$$
A_l^i(t) =  \int_{0}^{t}\int K(q_{l}^{i}(s),p)Q_{q_l}(dp,s)ds =
\sum_{j=1}^{N} a_{j}\int_0^t K(q_{l}^{i}(s),q_{l}^{j}(s))ds,
$$
$l=1,2$. Then, for any $i=1,\ldots, N$,
\begin{multline*}
\|A_1^i(t)-A_2^i(t)\|^2 \\
=\left\|
\int_{0}^{t}\sum_{j=1}^{N}a_{j}\left\{K(q_{1}^{i}(s),q_{1}^{j}(s))-K(q_{2}^{i}(s),q_{2}^{j}(s)\right\}ds\right\|
^{2}\\
\leq T \int_{0}^{t}\left\| \sum_{j=1}^{N}a_{j}\left\{ K(q_{1}^{i}(s),q_{1}^{j}(s))-K(q_{2}^{i}(s),q_{2}^{j}(s)\right\}\right\| ^{2}ds \\
\leq NT \sum_{j=1}^{N} a_{j}^{2}\int_{0}^{t} \left\| K(q_{1}^{i}(s),q_{1}^{j}(s))-K(q_{2}^{i}(s),q_{2}^{j}(s))\right\|^{2}ds\\
\leq NT \|K\|_L^2 \sum_{j=1}^{N}a_{j}^{2}  \int_{0}^{t}\left\{\left\|(q_{1}^{i}(s)-q_{2}^{i}(s)\right\|^2 +\left\|q_{1}^{j}(s)-q_{2}^{j}(s)\right\|^{2}\right\}ds\\
\leq 2NT\|K\|_L^2 \left(\sum_{j=1}^{N}a_{j}^{2}\right)
\int_{0}^{t}\left\|  q_{1}(s)-q_{2}(s)\right\|  _{N}^{2}ds,
\end{multline*}
where ${\|r-q\|}_N = \disp \max_{1\leq i\leq N}\|r^{i}-q^{i}\|$.

Next, using Doob's inequality,
\begin{multline*}
E\disp \sup_{0 \le u \leq t}\left\|  \int_{0}^{u}\int
\Gamma(q_{1}^{i}(s),p)w(dp,ds)-\int_{0}^{u}
\int\Gamma(q_{2}^{i}(s),p)w(dp,ds)\right\|^{2}\\
\le 4  \sum_{j=1}^d E\disp \left< \left<
\int_{0}^{t}\int \sum_{l=1}^{d} \left\{\Gamma_{jl}(q_{1}^{i}(s),p)-\Gamma_{jl}(q_{2}^{i}(s),p)\right\}w_l(dp,ds)\right> \right> \\
= 4\sum_{j=1}^{d} \sum_{l=1}^{d} E\disp \int_{0}^{t}\int \left\{ \Gamma_{jl}(q_{1}^{i}(s),p)-\Gamma_{jl}(q_{2}^{i}(s),p)\right\}^{2}dpds \\
\leq  4C_\Gamma^2     \int_{0}^{t} E
\|q_{1}^{i}(s)-q_{2}^{i}(s)\|^2 ds ,
\end{multline*}
where \eqref{eq:gamma} was used in the last chain of inequalities.
Hence, setting $C = 8C_\Gamma^2 + 4NT\|K\|_L^2
\left(\sum_{j=1}^{N}a_{j}^{2}\right)$, one obtains
\begin{equation}\label{eq:rec}
E \disp \sup_{0 \le s \leq t} \left\|\hat q_1(s)-\hat q_2(s)
\right\|_N^2 \le C\int_0^t E \|q_1(s)-q_2(s)\|_N^2 ds.
\end{equation}

Therefore, if $X_0(t) \equiv r(0)$ and $X_{k+1} = F(X_k)$, one
obtains, for all $t\in [0,T]$,
\begin{eqnarray*}
E \disp \sup_{0 \le s\leq t} \|X_{1}(s)-X_0(s)\|_N^2 &\le&   2t^2\max_{1\le i\le N} \left\| \int K(r^i(0),p)Q_0(dp)\right\|^2\\
&& \qquad +
8t\max_{1\le i\le N} \sum_{j=1}^d\sum_{l=1}^d \int \Gamma_{jl}^2(r^i(0),p)dp\\
& \le & C_1 t,
\end{eqnarray*}
for some constant $C_1$. Next, using the last  equality together
with \eqref{eq:rec}, one obtains
$$
E  \sup_{s\in [0,t]} \|X_{k+1}(s)-X_k(s)\|_N^2 \le \frac{C_1}{C}
\frac{(Ct)^{k+1}}{(k+1)!}, \quad k\ge 0.
$$
It follows from Borel-Cantelli Theorem that with probability one,
$$
\|X_{k+1}(s)-X_k(s)\|_N^2 \le 2^{-(k+1)}
$$
for all large $k$. Since $C([0,t];\R^{dN})$ is complete under the
sup norm, it follows that $X_k$ converges
 almost surely to $X \in C([0,t];\R^{dN})$. Since $F$ is a continuous mapping, one has $F(X)=X$ so $X$ is a solution.
 Finally, if $X$ and $Y$ are two solutions, i.e. $F(X)=X$ and $F(Y)=Y$, \eqref{eq:rec} yields
 $$
E  \sup_{s\in [0,t]} \|X(s)-Y(s)\|_N^2 \le C\int_0^t E  \sup_{u\in
[0,s]} \|X(u)-Y(u)\|_N^2du
 $$
so Gronwall's inequality entails that $E  \sup_{s\in [0,t]}
\|X(s)-Y(s)\|_N^2=0$, proving uniqueness. \end{proof}

\begin{proof}[Proof of Lemma \ref{lem:extension}]\label{app:extension}

Consider the following two $\R^d$-valued It\^{o} equations with
deterministic initial conditions $x_0,y_0\in \R^d$.
$$
\left\{
\begin{array}{lll}
dr(t)&=&\int K(r(t),p)\chi_t (dp)dt+\int\Gamma(r(t),p)w(dp,dt),\\
r(0) &=& x_0;\\
dq(t)&=& \int K(q(t),p)\eta_t(dp)dt+\int \Gamma (q(t),p)w(dp,dt),\\
q(0) &=& y_0.
\end{array}
\right.
$$

When $x_0 =x^{i}(0)$, then $r(t,x_0)  = x^{i}(t)$, using
uniqueness property in Lemma \ref{lem:sys-sde}. Similarly,
$q(t,y_0) = y^{i}(t)$, if $y_0 = y^i(0)$.

Let $Q^\tau_0$ be  joint representations of $\left(\chi^\tau_0,
\eta^\tau_0 \right)$, $\tau=1,2$. The following expressions define
joint representations $Q^\tau_t $ of $\left(\chi^\tau_t,
\eta^\tau_t\right)$ for every $t\ge 0$ and $\tau=1,2$: for $f\in C_{b}({\mathbb R}^{2d})$ set
$$
\int\int f(y,z)Q^\tau_t(dy,dz)=\int\int
f(r(t,y),q(t,z))Q^\tau_0(dy,dz).
$$

To see that $Q^1_t$ is indeed a representation for $\left(\chi^1_t,
\eta^1_t\right)$, remark that
\begin{eqnarray*}
\int\int f(y)Q^1_t(dy,dz)&= &\int\int f(r(t,y))Q^1_0(dy,dz)\\
&= & m_2 \int f(r(t,y))\chi^1_0(dy)\\
&=& m_2 \sum_{i; a_i\ge 0}a_i f(r(t,x^i(0)))\\
&=& m_2\sum_{i; a_i\ge 0} a_i f(x^{i}(t))\\
&= & m_2\int f(y)\chi^1_t(dy).
\end{eqnarray*}

Similarly,
$$
\int f(y)Q^2_t(dy,dz) = m_2\int f(y)\chi^2_t(dy)
$$
and
$$
\int f(z)Q^\tau_t (dy,dz) = m_1\int f(z)\eta^\tau_t (dz), \quad \tau \in \{1,2\}.
$$
It follows that if $\boldsymbol\chi = (\chi^1,\chi^2)$ and $\boldsymbol\eta = (\eta^1,\eta^2)$, then
\begin{eqnarray*}
\gamma_2^2(\boldsymbol\chi_t,\boldsymbol\eta_t) &\le&  \int \rho^2(r(t,y),q(t,z))Q^1_0(dy,dz) \\
&& \quad + \int \rho^2(r(t,y),q(t,z))Q^2_0(dy,dz).
\end{eqnarray*}

Also,
\begin{eqnarray*}
\|r(t)-r(0) -q(t)+q(0)\| &\ge & \rho(r(t)-q(t),r(0)-q(0))\\
&\ge & \rho(r(t),q(t))-\rho(r(0),q(0)).
\end{eqnarray*}

Next we calculate $\left\|  r(t)-r(0)-q(t)+q(0)\right\|^{2}$. First, set $\chi = \chi^1-\chi^2$ and $\eta = \eta^1-\eta^2$.

Proceeding as in Lemma \ref{lem:sys-sde}, one has
\begin{eqnarray*}
E  \sup_{0\leq t\leq T}\left\|  \int_{0}^{t}\int
\Gamma(r(s),p)w(dp,ds)-\int_{0}^{t}\int
\Gamma(q(s),p)w(dp,ds)\right\|  ^{2} &&\\
  \qquad \le 2c   \int_{0}^{T}E\rho^{2}(r(s),q(s))ds &&
\end{eqnarray*}
and
\begin{eqnarray*}
\left\|  \disp \int_{0}^{t}\int K(r(s),p)\chi_s(dp)ds
-\int_{0}^{t}\int K(q(s),p)\eta_s(dp)ds\right\|^{2}&& \\
\qquad \leq  2\left\| \disp  \int_{0}^{t}\int(K(r(s),p)-K(q(s),p))\chi_s(dp)ds\right\|  ^{2} &&  \\
+2\left\| \int_{0}^{t}\int K(q(s),p)(\chi_s
(dp)-\eta_s(dp))ds\right\|  ^{2}. &&
\end{eqnarray*}

Recall the notation $m= \max(m_1,m_2)$. Since ${\|K\|}_L \le C$, one has that
\begin{eqnarray*}
\left\|  \disp \int_{0}^{t}\int(K(r(s),p)-K(q(s),p))\chi_s(dp)ds\right\|  ^{2} && \\
\qquad \leq 4T \disp (mC)^{2} \int_{0}^{t}\rho^{2}(r(s),q(s))ds.
&&
\end{eqnarray*}

In addition, it follows  from \eqref{eq:kr2} that
\begin{eqnarray*}
\left\| \int_{0}^{t}\int K(q(s),p)(\chi_s(dp)-\eta_s(dp))ds\right\|^2  &&\\
\qquad \le 2T (mC)^{2} \disp \int_0^t \gamma_2^2(\boldsymbol\chi_s,\boldsymbol\eta_s)ds. && \\
\end{eqnarray*}
Hence
\begin{eqnarray*}
E \sup_{0\leq t\leq T}\rho^{2}(r(t),q(t))& \leq &  2E\disp
\sup_{0\leq t\leq T}\left\| r(t)-r(0)-q(t)+q(0)\right\|
^{2}+2\rho^{2}(y,z) \\ & \leq & 2\rho^{2}(y,z) \\ && \quad + C_1
\int_{0}^{T}E\disp \sup_{0\leq s\leq T}\rho ^{2}(r(s),q(s))ds \\
&& \qquad + C_1 \int_{0}^{T}\gamma_{2} ^{2}(\boldsymbol\chi_s,\boldsymbol\eta_s)ds,
\end{eqnarray*}
where $C_1 = 16T \left(c+(2mC)^2\right)$.

By Gronwall's inequality, we have
$$
E\disp \sup_{0\leq t\leq T}\rho^{2}(r(t),q(t))  \leq 2
e^{C_1T}\rho^{2}(y,z)+ e^{C_1 T}
\int_{0}^{T}\gamma_{2}^{2}(\boldsymbol\chi_s,\boldsymbol\eta_s)ds.
$$

Integrating the last inequality with respect to $Q^1_0 +Q^2_0$, one
obtains
\begin{eqnarray*}
E  \sup_{0\leq s\leq t}\gamma_2^{2}(\boldsymbol\chi_t,\boldsymbol\eta_t) & \leq & E\disp
\sup_{0\leq t\leq T}\rho^{2}(r(t),q(t)) \{Q^1_0 +Q^2_0\}(dy,dz)
\\ & \leq & 2 e^{C_1 T} \int \rho^{2}(y,z) \{Q^1_0 +Q^2_0\}(dy,dz)  \\ &&
\quad + m e^{C_1 T}  \int_{0}^{T}\gamma_{2}^{2}(\boldsymbol\chi_s,\boldsymbol\eta_s)ds.
\end{eqnarray*}
Taking the infimum over all $Q^\tau_0 \in
\H(\chi^\tau_0,\eta^\tau_0)$, $\tau=1,2$, one gets
\begin{eqnarray*}
E  \sup_{0\leq s\leq t}\gamma_2^{2}(\boldsymbol\chi_t,\boldsymbol\eta_t) &\le & 2e^{C_1
T} \gamma_2^{2}(\boldsymbol\chi_0,\boldsymbol\eta_0) \\ && \quad + m e^{C_1 T}
\int_{0}^{T}\gamma_{2}^{2}(\boldsymbol\chi_s,\boldsymbol\eta_s)ds.
\end{eqnarray*}

Using  Gronwall's inequality again yields \eqref{eq:relation}.
Finally, \eqref{eq:kr3} is obtained by combining \eqref{eq:kr2} and
\eqref{eq:relation}. \end{proof}

\begin{proof}
[Proof of Theorem \ref{thm:extpsi}]\label{app:pfthmpsi}

To this end, let  $\boldsymbol\chi_0 \in \mathcal{M}$ be given. Since
$\mathcal{M}^{(f)}$ is dense in $\mathcal{M}$, there exists a
sequence $\boldsymbol\chi_{0,n}\in \mathcal{M}^{(f)}$ so that
$\gamma(\boldsymbol\chi_0,\boldsymbol\chi_{0,n})\to 0$, as $n\to \infty$. Using Lemma
\ref{lem:extension}, it follows that $\boldsymbol\chi_n = \Psi(\boldsymbol\chi_{0,n})$ is
a Cauchy sequence in $\M$. Thus there exists $\boldsymbol\chi \in \M$ so that
$\gamma_{[0,T]}(\boldsymbol\chi,\boldsymbol\chi_n) \to 0$. Since the limit does not depend
on the sequence, the mapping is well-defined.

It also follows that for any $\boldsymbol\chi_0,\boldsymbol\eta_0 \in \mathcal{M}$, the
corresponding paths $\boldsymbol\chi,\boldsymbol\eta \in \mathcal{M}_{[0,T]}$ satisfy
$$
\gamma_{[0,T]}(\boldsymbol\chi,\boldsymbol\eta) \le  c' \gamma(\boldsymbol\chi_0,\boldsymbol\eta_0).
$$

Next one will show that this extension gives a weak solution of
the stochastic evolution equation \eqref{eq:WSNSE}.

Using \eqref{eq:kr3} and the last inequality, one can conclude
that for any $f$ such that $\|f\|_L \le 1$,  one has
$$
E\sup_{0\le t\le T} \sup_{{\|f\|}_L \le 1}
<\chi_t-\eta_t,f>^2 \; \le c''
\gamma(\boldsymbol\chi_0,\boldsymbol\eta_0),
$$
where $\chi = \chi^1-\chi^2$ and $\eta = \eta^1-\eta^2$.
In particular,
$$
\lim_{n\to\infty} E\sup_{0\le t\le T} \sup_{{\|f\|}_L \le 1}
<\chi_t-\chi_{t,n} ,f>^2  = 0.
$$

The next step is to show that for any $f\in C_{b}^{2}({\mathbb
R}^d)$ and $\boldsymbol\chi_0\in M$, $ \chi$ satisfies
\eqref{eq:WSNSE}.

Note that by the choice of $f$, both $f$ and $\nabla f$ are
bounded, so the right-hand side of the stochastic evolution
equation  is defined for $<\chi_t,f>$. Moreover, since
$$
{\left\|f\right\|}_L <\infty \quad \mbox{and} \quad {\left\|
\nabla f\right\|}_L  <\infty,
$$
it follows that
$$
\lim_{n\to\infty} E\sup_{0\le t\le T}
<\chi_t-\chi_{t,n},f>^2  = 0.
$$

Similarly,
$$
\disp E\sup_{0\le t\le T} \sup_{p}
|U(p,\chi_t)-
U(p,\chi_{t,n})|^2\to 0
$$
and  one can prove that all righthand side terms of \eqref{eq:WSNSE}
tends to zero, as $n$ tends to infinity. This completes the proof.
\end{proof}

\begin{proof}[Proof of Lemma \ref{lem:contraction}]\label{app:cont}

For any $\bmu \in \M_{\boldsymbol\nu}$, $\mu = \mu^1-\mu^2$, and any $x \in\R^d$, let $r(t,\mu,x)$ be
the unique solution of the following It\^{o} equation:
$$
\left\{
\begin{array}{lll}
dr(t)&=&  \int K(r(t),p)\mu_t(dp) dt + \int\Gamma(r(t),p)w(dp,dt),\\
r(0) &=& x.
\end{array}
\right.
$$
The proof of existence and uniqueness is similar to that of Lemma \ref{lem:sys-sde}.\\

Next, recall that the operator $S$, acting on $\bmu \in \M_{\boldsymbol\nu}$, is
defined by
$$
(S\bmu)^\tau_t = \nu^\tau \circ r(t,\mu,\cdot),\quad \tau=1,2.
$$

For $\tau=1,2$, let $Q^\tau$ be  joint representations for
$\left(\nu^\tau, \nu^\tau\right)$ and define, for any $\bmu, \boldsymbol\eta\in
\M_{\boldsymbol\nu}$, the joint representations $Q^\tau_t $ for $ (S\bmu)^\tau_t$
and $ (S\boldsymbol\eta)^\tau_t$ as follows: for every
$f\in C_{b}({\mathbb R}^{2d})$
$$
\int\int f(y,z)Q^\tau_t(dy,dz)=\int\int f(r(t,\mu,y),r(t,\eta,
z))Q^\tau(dy,dz).
$$

To see that $Q^\tau_t$ is indeed a representation for $\left(
(S\bmu)^\tau_t, (S\boldsymbol\eta)^\tau_t\right)$, $\tau=1,2$, remark that
\begin{eqnarray*}
\int\int f(y)Q^\tau_t(dy,dz)&= &\int\int f(r(t,\mu,y))Q^\tau(dy,dz)\\
&= & m_2\int f(r(t,\mu,y))\nu^\tau(dy)\\
&=& m_2<(S\bmu)_t^\tau,f>.
\end{eqnarray*}

Similarly, for $\tau=1,2$,
\begin{eqnarray*}
\int\int f(z)Q^\tau_t(dy,dz)&= &\int\int f(r(t,\eta,z))Q^\tau(dy,dz)\\
&= & m_1\int f(r(t,\eta,z))\nu^\tau(dz)\\
&=& m_1<(S\boldsymbol\eta)_t^\tau,f>.
\end{eqnarray*}

It follows that
\begin{eqnarray*}
\gamma_2^2((S\bmu)_t,(S\boldsymbol\eta)_t) &\le&  \int \rho^2(r(t,\mu,y),r(t,\eta,z))Q^1(dy,dz) \\
&& \quad + \int \rho^2(r(t,\mu,y),r(t,\eta,z))Q^2(dy,dz).
\end{eqnarray*}

The rest of the proof is almost identical to the proof of Lemma \ref{lem:extension} so it is omitted.
\end{proof}

\begin{proof}[Proof of Lemma \ref{lem:dual}]\label{app:dual}
The proof is classical. Suppose there are two solutions, and write $\chi $ for their difference.
It is easy to check that for any $\phi\in H_q$,
\begin{eqnarray*}
<\chi_t,\phi> &=& \int_0^t  <\chi_s, L(\mu_s) \phi >ds \\
&& \qquad +  \int_0^t \int <\chi_s, \nabla
\phi(\cdot)^\top\Gamma(\cdot ,p)> w(dp,ds).
\end{eqnarray*}
Therefore applying Ito's formula to $e^{-t C_p}<\chi_t,\phi>^2$ and taking expectations, one ends up with
\begin{eqnarray*}
e^{-2t C_p} E \left\{<\chi_t,\phi>^2 \right\}  &= &
2   E \left\{ \int_0^t  e^{-2s C_p}<\chi_s,\phi><\chi_s,L_{\mu_s}\phi>ds \right\}\\
&& \quad -2C_p E \left\{ \int_0^t e^{-2s C_p}<\chi_s,\phi>^2ds \right\} \\
&& \qquad +  E\left\{\int_{\R^d}\int_0^t e^{-2s C_p} \left\|\nabla \phi \Gamma(\cdot,x)\chi_s\right\|^2 dx ds \right\}.
\end{eqnarray*}
Summing over a complete orthonormal system of $H_q$, one  gets
\begin{eqnarray*}
e^{-t C_p}E\left\{ \|\chi_t\|_{-q}^2 \right\} &= &   2\int_0^t e^{-2s C_p} E\left\{ <\chi_s,L_{\mu_s}^* \chi_s>_{-q} \right\} ds\\
&& \quad -2C_p E \left\{ \int_0^t e^{-2s C_p}\|\chi_s\|_{-q}^2ds \right\} \\
&& \qquad + E\left\{\int_{\R^d}\int_0^t e^{-2s C_p} \left\|\nabla^* \Gamma(\cdot,x)\chi_s\right\|^2 dx ds \right\}\\
&\le & 0,
\end{eqnarray*}
using the monotonicity condition \eqref{eq:mono}. Hence the result.
\end{proof}

%%%%%%%%%%%%%%%%%%%%%%%%%%%%%%%%%%%%%%%%%%%%%%%%%%%%%%%%%%%%%%%%%%%%%%%%%%%%
\section{Disproof of Marchioro and Pulvirenti claim}  \label{app:disprove}
%%%%%%%%%%%%%%%%%%%%%%%%%%%%%%%%%%%%%%%%%%%%%%%%%%%%%%%%%%%%%%%%%%%%%%%%%%%%

Denote by $x(t,y)$ the solution of the stochastic differential
equation
$$
d x_t = U_\epsilon(x_t,\mu_t)dt + \sigma dW_t,\qquad x_0=y.
$$

Recall that their mapping $\mathcal{S}$ is defined by
$$
(\mathcal{S}\mu)_t(A) =  \int P_t(A|y)\nu(dy)
$$
for all Borel sets $A\subset \R^2$, where $P_t(\cdot|y)$ are the
transition probabilities of the diffusion process $x(t,y)$.

Suppose that $\nu \in SM(m_1,m_2)$ and $(\mathcal{S}\mu)_t \in SM(m_1,m_2)$,
as claimed in \citet{Marchioro/Pulvirenti:1982}. It follows from Proposition
\ref{prop:jordan} that the Hahn-Jordan decomposition of $(\mathcal{S}\mu)_t$ is
$$
\left( (\mathcal{S}\mu)_t^1, (\mathcal{S}\mu)_t^2 \right)= \left(\int P_t(\cdot|y)\nu^1(dy), \int P_t(\cdot|y)\nu^2(dy)\right),
$$
if $(\nu^1,\nu^2)$ is the  Hahn-Jordan decomposition of $\nu$. Therefore,
there exist disjoint sets $A^1$ and $A^2$ so that
$(\mathcal{S}\mu)^\tau_t(A^\tau)=m_\tau$, $\tau=1,2$, $(\mathcal{S}\mu)^1_t(A^2)=0$ and $(\mathcal{S}\mu)^2_t(A^1)=0$.

It follows that there exist Borel sets $N^1$ and $N^2$ so that $\nu^\tau(N^\tau)=m_\tau$ and
$
P_t(A^\tau|y)=1$ for all $y\in N^\tau$, $\tau=1,2$. In addition $P_t(A^1|y)=0$ for  all $y\in N^2$, and
 $P_t(A^2|y)=0$ for  all $y\in N^1$.  Hence, $N^1$ and $N^2$ are disjoint, showing that $\nu^1(N^2)=0 = \nu^2(N^1)$.

Because for any $z\in\mathbb{R}^2$, $P_t(\cdot|z)$ has a positive density with respect to Lebesgue measure (since it can transformed into
a Wiener process with respect to an equivalent measure using Girsanov's formula), it follows that both $A^1$ and $A^2$ would have zero
Lebesgue measure, contradicting the equations $P_t(A^\tau|y)=1$ for all $y\in N^\tau$, $\tau=1,2$.

Therefore, $(\mathcal{S}\mu)_t \not \in SM(m_1,m_2)$.

\bibliographystyle{apalike}
\bibliography{d:/perso/mytex/bib/all2005}

%%%%%%%%%%%%%%%%%%%%%%%%%%%%%%%%%%%%%%%%%%%%
\end{document}